\newcommand{\imod}[1]{\allowbreak\mkern4mu({\operator@font mod}\,\,#1)}
\renewcommand{\a}{\alpha}
\renewcommand{\b}{\beta}
\renewcommand{\L}{\Lambda}
\renewcommand{\l}{\lambda}
\newcommand{\leqs}{\leqslant}
\newcommand{\geqs}{\geqslant}
\newcommand{\vs}{\vspace{3mm}}
\newcommand{\la}{\langle}
\newcommand{\ra}{\rangle}
\newcommand{\R}{\mathbb{R}}
\newcommand{\C}{\mathcal{C}}
\newcommand{\Z}{\mathbb{Z}}
\begin{document}

\maketitle{IRREDUCIBLE SUBGROUPS OF SIMPLE ALGEBRAIC \\ GROUPS -- A
  SURVEY}{%
  TIMOTHY C. BURNESS$^{\ast}$ and DONNA M. TESTERMAN$^{\dagger}$}{%
  $^{\ast}$School of Mathematics, University of Bristol, Bristol BS8
  1TW, UK \newline
  Email: t.burness@bristol.ac.uk \\[2pt]
  $^{\dagger}$Institut de Math\'ematiques, Station 8, \'{E}cole
  Polytechnique F\'{e}d\'{e}rale de Lausanne, \\ CH-1015 Lausanne,
  Switzerland
  \newline
  Email: donna.testerman@epfl.ch}

\begin{abstract}
Let $G$ be a simple linear algebraic group over an algebraically closed field $K$ of characteristic $p \geqs 0$, let $H$ be a proper closed subgroup of $G$ and let $V$ be a nontrivial finite dimensional irreducible rational $KG$-module. We say that $(G,H,V)$ is an irreducible triple if $V$ is irreducible as a $KH$-module. Determining these triples is a fundamental problem in the representation theory of algebraic groups, which arises naturally in the study of the subgroup structure of classical groups. In the 1980s, Seitz and Testerman extended earlier work of Dynkin on connected subgroups in characteristic zero to all algebraically closed fields. In this article we will survey recent advances towards a classification of irreducible triples for all positive dimensional subgroups of simple algebraic groups. 
\end{abstract}

\section{Introduction}\label{s:intro}

Let $G$ be a group, let $H$ be a proper subgroup of $G$ and let $V$ be a nontrivial finite dimensional irreducible $KG$-module for some field $K$. Let us write $V|_{H}$ to denote the restriction of $V$ to $H$. Then $(G,H,V)$ is an \emph{irreducible triple} if and only if $V|_{H}$ is irreducible. In this survey article we study irreducible triples in the special case where $G$ is a simple linear algebraic group over an algebraically closed field $K$ of characteristic $p \geqs 0$, $H$ is positive dimensional and closed, and $V$ is a rational $KG$-module. Throughout this article all algebraic groups considered are linear; henceforth, we use the term ``algebraic group'' in place of ``linear algebraic group''.

The study of these triples was initiated by Dynkin \cite{Dynkin0,Dynkin} in the 1950s (for $K = \mathbb{C}$) and it was subsequently extended by Seitz \cite{Seitz2} (classical groups) and Testerman \cite{Test1} (exceptional groups) in the 1980s to arbitrary algebraically closed fields. The latter work of Seitz and Testerman provides a classification of the irreducible triples $(G,H,V)$ in the case where $H$ is connected, with the extra condition that if $G$ is classical then $V$ is not the natural module, nor its dual (this assumption is unavoidable). More recently, several authors have focussed on extending this classification to all positive dimensional subgroups and this goal has now essentially been achieved in the series of papers \cite{BGMT,BGT,BMT,Ford1, Ford2, g_paper}. In this article, we will describe some of these recent advances, highlighting the main ideas and techniques. 

As we will discuss in Section \ref{ss:new}, it has very recently come to light that there is an omission in the statement of the main theorem of \cite{Seitz2}. More specifically, there is a 
gap in the proof of \cite[(8.7)]{Seitz2} and a new infinite family of irreducible triples has been discovered by Cavallin and Testerman \cite{CT} (together with an additional missing case for $(G,H,p) = (C_4, B_3,2)$). With a view towards future applications, one of the goals of this article is to record an amended version of Seitz's theorem from \cite{Seitz2}. In addition, we will show that the new family of triples does not have any effect on the main results in \cite{BGMT, BGT} on irreducible triples for disconnected maximal subgroups (see Section \ref{s:disc} for the details). 

\vs

Irreducible triples arise naturally in the study of the subgroup structure of classical algebraic groups, which has been an area of intensive research for several decades. To illustrate the connection, let us consider the group $G = {\rm SL}(V)$, where $V$ is a finite dimensional vector space over an algebraically closed field $K$, and the problem of classifying the maximal closed positive dimensional subgroups of $G$. Let $H$ be a closed positive dimensional subgroup of $G$. As a special case of an important reduction theorem of Liebeck and Seitz \cite[Theorem 1]{LS_classical} (see Theorem \ref{t:ls}), one of the following holds:
\begin{itemize}
\item[(i)] $H$ is contained in a subgroup $M$, where $M$ is the stabilizer in $G$ of a proper non-zero subspace of $V$, or the stabilizer of a nontrivial direct sum or tensor product decomposition of $V$, or the stabilizer of a nondegenerate form on $V$. 
\item[(ii)] The connected component $H^\circ$ is simple and acts irreducibly and tensor indecomposably on $V$, and does not preserve a nondegenerate form on $V$. 
\end{itemize}

It is straightforward to determine which of the subgroups $M$ as in (i) are maximal among closed subgroups; indeed with very few exceptions, these subgroups are maximal in $G$. However, it is not so easy to determine when subgroups $H$ satisfying (ii) are maximal. If such a subgroup $H$ is not maximal, then it must be properly contained in a closed subgroup $L<G$. Moreover, $L^\circ$ must itself be a simple algebraic group acting irreducibly (and tensor indecomposably) on $V$. In addition, if $L^\circ$ is a classical group then $V$ is not the natural module (nor its dual) for $L^\circ$. Therefore, $(L^\circ, H^\circ,V)$ is an irreducible triple of the form studied by Seitz and Testerman \cite{Seitz2, Test1} and we have now ``reduced'' the determination of the maximal closed positive dimensional subgroups of $G$ to the classification of these irreducible triples. 

\vs

Let us say a few words on the organisation of this article. In Sections \ref{s:rep} and \ref{s:cla} we briefly recall some of the main results and terminology we will need on the representation theory and subgroup structure of simple algebraic groups. In particular, we discuss the parametrization of irreducible modules in terms of highest weights and we present a key theorem of Liebeck and Seitz \cite{LS_classical} on the maximal subgroups of classical algebraic groups (see Theorem \ref{t:ls}). Seitz's theorem \cite{Seitz2} on connected irreducible subgroups is then the main focus of Section \ref{s:con}; we briefly outline some of the main ideas in the proof and we discuss the new examples discovered by Cavallin and Testerman \cite{CT}. This allows us to present a corrected version of Seitz's result (see Theorem \ref{t:seitz}, together with Table $3$ in Section \ref{s:tab}). 

In Section \ref{s:disc} we present recent work of Burness, Ford, Ghandour, Marion and Testerman \cite{BGMT, BGT, Ford1, Ford2}, which has extended Seitz's theorem to disconnected positive dimensional maximal subgroups of classical algebraic groups. Here the analysis naturally falls into two cases according to the type of maximal subgroup (in terms of the methods, there is an important distinction between \emph{geometric} and \emph{non-geometric} maximal subgroups). The main result is Theorem \ref{t:main_disc}, with the relevant irreducible triples presented in Table $4$. We present a couple of concrete cases (see Examples  \ref{ex:c2} and \ref{ex:ng}) to illustrate the main ideas in the proof. Since some of the arguments rely on Seitz's main theorem for connected subgroups, we also take the opportunity to verify the statements of the main theorems in \cite{BGMT,BGT} in light of the new examples discovered in \cite{CT} (see Propositions \ref{p:geomcheck} and \ref{p:nongeomcheck}). Finally, in Section \ref{s:tab} we present the tables referred to in Theorems \ref{t:seitz} and \ref{t:main_disc}.

\paragraph{Acknowledgments.} This survey article is based on the content of the second author's one-hour lecture at the \emph{Groups St Andrews} conference, which was hosted by the University of Birmingham in August 2017. It is a pleasure to thank the organizers of this meeting for their generous hospitality. The second author acknowledges the support of the Swiss National Science Foundation, grant number 200021-156583.
In addition, the material is based upon work supported by the US National Science Foundation under Grant No. DMS-1440140 while the second author was in residence at the Mathematical Sciences Research Institute in Berkeley, California during the Spring 2018 semester. Finally, both authors are very grateful to Mikko Korhonen, Martin Liebeck, Gunter Malle and Gary Seitz for their helpful comments on an earlier draft of this article.

\section{Representation theory}\label{s:rep}

Let $G$ be a simply connected simple algebraic group of rank $n$ defined over an algebraically closed field $K$ of characteristic $p \geqs 0$. In this section we introduce some standard notation and terminology, and we briefly recall the parametrization of finite dimensional irreducible $KG$-modules in terms of highest weights, which will be needed for the statements of the main results in Sections \ref{s:con} and \ref{s:disc}. We refer the reader to \cite[Chapters 15 and 16]{MT} for further details on the basic theory, and \cite{Jantzen} for a more in-depth treatment.

\vs

Let $B=UT$ be a Borel subgroup of $G$ containing a fixed maximal torus $T$ of $G$, where $U$ denotes the unipotent radical of $B$. Let $\Pi(G)=\{\a_1, \ldots, \a_{n}\}$ be a corresponding base of the root system $\Sigma(G)=\Sigma^+(G) \cup \Sigma^{-}(G)$ of $G$, where $\Sigma^+(G)$ and $\Sigma^{-}(G)$ denote the positive and negative roots of $G$, respectively (throughout this article, we adopt the standard labelling of simple roots and Dynkin diagrams given in Bourbaki \cite{Bou}). Moreover, we will often identify a
simple algebraic group $S$ with its root system, writing $S = \Sigma_m$, where $\Sigma_m$ is one of 
\[
A_m, \, B_m, \, C_m, \, D_m, \, E_8, \, E_7, \, E_6, \, F_4, \, G_2,
\]
to mean that $S$ has root system of the given type, and use the terminology ``$S$ is of type $\Sigma_m$''. In particular, for such $S$, if $J$ is a simply connected simple group with the same root system type as $S$, then there is a surjective morphism from $J$ to $S$, 
and the representation theory of $S$ is controlled by that of $J$. 

Let $X(T) \cong \Z^n$ denote the character group of $T$ and let $\{\l_1, \ldots, \l_{n}\}$ be the fundamental dominant weights for $T$ corresponding to our choice of base $\Pi(G)$, so $\la \l_i,\a_j\ra=\delta_{i,j}$ for all $i$ and $j$, where
$$\la \l, \a \ra = 2\frac{(\l,\a)}{(\a,\a)},$$
$(\; ,\,)$ is an inner product on $X(T)_{\R}=X(T) \otimes_{\Z} \R$, and $\delta_{i,j}$ is the familiar Kronecker delta. In addition,  let $s_{\a}:X(T)_{\R} \to X(T)_{\R}$  be the reflection relative to $\a \in \Sigma(G)$, defined by $s_{\a}(\l)=\l-\la \l,\a \ra\a$, and set $s_i=s_{\a_i}$. The finite group
\[
W(G) = \la s_{i} \mid 1 \leqs i \leqs n\ra < \operatorname{GL}(X(T)_{\R})
\]
is called the Weyl group of $G$. In addition, we will write $U_{\a}=\{x_{\a}(c) \mid c \in K\}$ to denote the $T$-root subgroup of $G$ corresponding  to $\a \in \Sigma(G)$. Finally, if $H$ is a closed reductive subgroup of $G$ and $T_{H^\circ}$ is a maximal torus of $H^\circ$ contained in $T$ then we will sometimes write $\l|_{H^\circ}$ to denote the restriction of $\l \in X(T)$ to the subtorus $T_{H^\circ}$. 

Let $V$ be a finite dimensional $KG$-module. The action of $T$ on $V$ can be diagonalized, giving a decomposition
$$V = \bigoplus_{\mu \in X(T)}{V_{\mu}},$$
where
$$V_{\mu}=\{v \in V \mid tv = \mu(t)v \mbox{ for all $t \in T$}\}.$$
A character $\mu \in X(T)$ with $V_{\mu} \neq 0$ is called a weight (or $T$-weight) of $V$, and $V_{\mu}$ is its corresponding weight space. The dimension of $V_{\mu}$, denoted by $m_{V}(\mu)$, is called the multiplicity of $\mu$. We write $\L(V)$ for the set of weights of $V$. For any weight $\mu \in \L(V)$ and any root $\a \in \Sigma(G)$ we have 
\begin{equation}\label{e:mt}
U_{\a}v \subseteq v +\sum_{m \in \mathbb{N}^{+}}V_{\mu+m\a}
\end{equation}
for all $v \in V_{\mu}$. There is a natural action of the Weyl group $W(G)$ on $X(T)$, which in turn induces an action on $\L(V)$. In particular, $\L(V)$ is a union of $W(G)$-orbits, and all weights in a $W(G)$-orbit have the same multiplicity.

By the Lie-Kolchin theorem, the Borel subgroup $B$ stabilizes a $1$-dimensional subspace $\la v^{+} \ra$ of $V$; the action of $B$ on $\la v^{+} \ra$ affords a homomorphism $\chi:B \to K^*$ with kernel $U$. Therefore $\chi$ can be identified with a character $\l \in X(T)$, which is a weight of $V$. If $V$ is an irreducible $KG$-module then the $1$-space $\la v^{+} \ra$ is unique, $V=\la Gv^{+} \ra$, $m_{V}(\l)=1$ and each weight $\mu \in \L(V)$ is obtained from $\l$ by subtracting some positive roots. Consequently, we say that $\l$ is the highest weight of $V$, and $v^+$ is a maximal vector. 

Since $G$ is simply connected, the fundamental dominant weights form a $\Z$-basis for the additive group of all weights for $T$, and a weight $\l$ is said to be dominant if $\l=\sum_{i=1}^{n}a_i\l_i$ and each $a_i$ is a non-negative integer. If $V$ is a finite dimensional irreducible $KG$-module then its highest weight is dominant. Conversely, given any dominant weight $\l$ one can construct a finite dimensional irreducible $KG$-module with highest weight $\l$. Moreover, this correspondence defines a bijection between the set of dominant weights of $G$ and the set of isomorphism classes of finite dimensional irreducible $KG$-modules. For a dominant weight $\l=\sum_{i=1}^{n}a_i\l_i$ we write $L_G(\l)$ for the irreducible $KG$-module with highest weight $\l$. If ${\rm char}(K)=p>0$, we say that $L_G(\l)$ is $p$-restricted (or that $\l$ is a $p$-restricted weight) if $a_i<p$ for all $i$. It will be convenient to say that every dominant weight is $p$-restricted when $p=0$. We will often represent the $KG$-module $L_G(\l)$ by an appropriately labelled Dynkin diagram (see Tables 3 and 4, for example). For instance, 
$$\begin{tikzpicture}[scale=.4, baseline=(current bounding box.base)]
   \tikzstyle{every node}=[font=\small]
   \draw (2,1.4) node[anchor=north] {$1$};
   \draw (6,1.4) node[anchor=north] {$1$};
    \foreach \x in {0,...,4}
    \draw[xshift=\x cm,fill=black] (\x cm,0) circle (.25cm);
    \foreach \y in {0.1,...,3.1}
    \draw[xshift=\y cm] (\y cm,0) -- +(1.6 cm,0);
  \end{tikzpicture}$$
  denotes the irreducible $KG$-module $L_G(\l_2+\l_4)$ for $G = A_5 = {\rm SL}_{6}(K)$.

In characteristic zero, the dimension of $L_G(\l)$ is given by Weyl's dimension formula
\[
\dim L_G(\l) = \prod_{\a \in \Sigma^{+}(G)} \frac{(\a, \l+\rho)}{(\a,\rho)},
\]
where $\rho = \frac{1}{2}\sum_{\a \in \Sigma^{+}(G)}\a$. However, there is no known formula for $\dim L_G(\l)$ when $p>0$ (the above expression is an upper bound), apart from some special cases (see \cite{PS} and \cite[Table 2.1]{BGT}, for example). Using computational methods, L\"{u}beck \cite{Lubeck} has calculated the dimension of every irreducible $KG$-module in all characteristics, up to some fixed dimension depending on the type of $G$. For instance, the irreducible modules for ${\rm SL}_{3}(K)$ of dimension at most $400$ can be read off from \cite[Table A.6]{Lubeck}.

In positive characteristic $p>0$, the Frobenius automorphism of~$K$, \ $F_p:K \to K$, $c \mapsto c^p$, induces an endomorphism $F:G \to G$ of algebraic groups defined by $x_{\a}(c) \mapsto x_{\a}(c^p)$, for all $\a \in \Sigma(G)$, $c \in K$. Given a rational $KG$-module $V$ and corresponding representation $\varphi:G \to {\rm GL}(V)$, we can use $F$ to define a new representation $\varphi^{(p^i)} \colon G \to \operatorname{GL}(V)$ for each integer $i \geqs 1$. 
The corresponding $KG$-module is denoted by $V^{(p^i)}$ and the action is given by $\varphi^{(p^i)}(g)v = \varphi(F^i(g))v$ for $g \in G, v \in V$. We say that $V^{(p^i)}$ is a Frobenius twist of $V$. 

By Steinberg's tensor product theorem (see \cite[Theorem 16.12]{MT}, for example), every finite dimensional irreducible $KG$-module $V$ is a tensor product of Frobenius twists of $p$-restricted $KG$-modules, so it is natural to focus on the $p$-restricted modules. Indeed, if $H$ is a subgroup of $G$ and $V = V_1 \otimes V_2$ is a tensor product of $KG$-modules, then $V$ is irreducible as a $KH$-module only if $V_1$ and $V_2$ are both irreducible $KH$-modules. This observation essentially reduces the problem of determining the irreducible triples for $G$ to the $p$-restricted case, whence the main results in Sections \ref{s:con} and \ref{s:disc} will be stated in terms of $p$-restricted tensor indecomposable $KG$-modules. 

Finally, we record the following technical lemma, which will be relevant later.

\begin{lemma}\label{l:images}
Let $G$ be a simply connected simple algebraic group of type $B_n$ or $C_n$ with $n \geqs 3$, defined over an algebraically closed field of characteristic $2$. Let $\l$ be a $2$-restricted dominant weight for $G$ and let $\varphi : G \to {\rm GL}(V)$ be the associated irreducible representation. Then $\varphi(G)$ is again of type $B_n$, respectively $C_n$, if and only if $\la \l, \a\ra \ne 0$ for some short simple root $\a$ of $G$.
\end{lemma}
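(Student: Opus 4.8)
The plan is to use Steinberg's theory of special isogenies, which in characteristic $2$ links the simple groups of type $B_n$ and $C_n$. Write $\bar{G}$ for the simple algebraic group of the dual type, so that $\bar{G}$ is of type $C_n$ if $G$ is of type $B_n$, and conversely, and let $\sigma \colon G \to \bar{G}$ be the corresponding special isogeny. Recall that $\sigma$ restricts to an isomorphism on each long root subgroup of $G$, while on each short root subgroup it is a Frobenius map $x_{\a}(c) \mapsto x_{\a'}(c^{2})$, where $\a'$ is the corresponding long root of $\bar{G}$; there is a companion isogeny $\bar{\sigma} \colon \bar{G} \to G$ with the analogous behaviour, and $\bar{\sigma}\sigma = F$ is the Frobenius endomorphism of $G$. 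In particular, $\sigma$ induces an injection $\sigma^{*} \colon X(\bar{T}) \to X(T)$ of character groups that can be written down explicitly in the standard basis $e_1, \dots, e_n$.

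I would first assume $\l \neq 0$ (the case $\l = 0$ being degenerate, with both sides of the equivalence failing). Since $G$ is simple and $\varphi$ is then nontrivial, $\ker\varphi$ is finite and $\theta \colon G \to \varphi(G)$ is an isogeny. By Steinberg's classification of isogenies of simple algebraic groups, $\theta$ is, up to a central isogeny and a power of $F$, either the identity or the special isogeny $\sigma$; moreover the Dynkin diagrams of $B_n$ and $C_n$ have no nontrivial symmetry for $n \geqs 3$. Here the hypothesis that $\l$ is $2$-restricted is exactly what excludes a Frobenius factor: if $\theta$ involved $F$ then $\varphi$ would be a Frobenius twist and $\l = 2\mu$ would fail to be $2$-restricted (as $\l \neq 0$). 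Consequently $\varphi(G)$ is of type $B_n$ or $C_n$, and it is of the \emph{same} type as $G$ if and only if $\theta$, equivalently $\varphi$, does not factor through $\sigma$. The problem is thereby reduced to deciding when $\varphi$ factors through $\sigma$.

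For the main step, observe that if $\l = \sigma^{*}(\bar\l)$ for a dominant weight $\bar\l$ of $\bar{G}$, then the representation $\bar\varphi \circ \sigma$, where $\bar\varphi$ affords $L_{\bar{G}}(\bar\l)$, is irreducible with highest weight $\l$ and hence isomorphic to $L_G(\l)$; thus $\varphi$ factors through $\sigma$ if and only if $\l \in \mathrm{Im}(\sigma^{*})$. Using the description of $\sigma$ on root subgroups, I would compute $\mathrm{Im}(\sigma^{*})$ in these coordinates and verify that it consists precisely of those $\mu \in X(T)$ for which $\la \mu, \a \ra$ is even for every short simple root $\a$. For $G$ of type $B_n$ the unique short simple root is $\a_n$, and this recovers the sublattice of non-spin weights, namely those with even coefficient on $\l_n$; for $G$ of type $C_n$ the short simple roots are $\a_1, \dots, \a_{n-1}$, and one obtains the condition that $a_1, \dots, a_{n-1}$ all be even. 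Since $\l$ is $2$-restricted we have $\la \l, \a_i \ra = a_i \in \{0,1\}$, so this parity condition holds at every short simple root if and only if $\la \l, \a \ra = 0$ for all of them. Combining with the previous paragraph, $\varphi(G)$ is of the same type as $G$ if and only if $\la \l, \a \ra \neq 0$ for some short simple root $\a$, which is the assertion.

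The principal obstacle is the explicit determination of $\mathrm{Im}(\sigma^{*})$, which rests on knowing that it is the short root subgroups of $G$, and not the long ones, that are twisted by the Frobenius under $\sigma$; the entire dichotomy hinges on this point, and I would confirm it either by a direct calculation with the Chevalley relations or by appealing to Steinberg. A secondary point requiring care is to keep the identification of ``factoring through $\sigma$'' with ``image of the dual type'' free of circular reasoning between the $B_n$ and $C_n$ cases; this is achieved cleanly by invoking Steinberg's isogeny theorem for the single isogeny $\theta$, with $2$-restrictedness ruling out any Frobenius contribution and forcing $\theta$ to be central or special.
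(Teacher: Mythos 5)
Your argument is correct, but it takes a genuinely different route from the paper. The paper works at the level of the Lie algebra: by a theorem of Curtis the differential $d\varphi$ of a $p$-restricted irreducible representation is irreducible, hence nonzero; the proper ideal of ${\rm Lie}(G)$ generated by the short root vectors lies in $\ker(d\varphi)$ precisely when $\langle \lambda,\alpha\rangle = 0$ for every short simple root $\alpha$; and since ${\rm im}(d\varphi)$ is an ideal of ${\rm Lie}(\varphi(G))$, the type of $\varphi(G)$ is read off from the differing ideal structures of the Lie algebras of types $B_n$ and $C_n$ in characteristic $2$, as recorded by Hogeweij. You instead work at the level of the group via Steinberg's isogeny theorem: the corestriction $\theta\colon G \to \varphi(G)$ corresponds to a $p$-morphism of root data, $2$-restrictedness of $\lambda$ (with $\lambda \neq 0$) rules out any Frobenius factor, so $\theta$ is central (same type) or essentially the special isogeny $\sigma$ (dual type), and the latter happens exactly when $\lambda \in {\rm Im}(\sigma^{*})$. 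Your computation of that lattice is right: since $\sigma^{*}(\bar{\lambda}_i)$ equals $\lambda_i$ or $2\lambda_i$ according as $\alpha_i$ is long or short, a $2$-restricted weight lies in ${\rm Im}(\sigma^{*})$ if and only if its coefficients on the short simple roots all vanish, which is the stated criterion. Each approach buys something: the paper's proof is very short given the cited references and never mentions isogenies, while yours is self-contained modulo Steinberg's theorem and makes transparent \emph{why} the dichotomy holds (the representation literally descends to the dual group). Two points in your sketch deserve to be made explicit in a final write-up, though neither is a gap: first, the verification that it is the short root subgroups of the source that are Frobenius-twisted under $\sigma$ (you correctly identify this as the crux); second, the closing step that when $\varphi = \bar{\varphi}\circ\sigma$ the induced isogeny $\bar{G} \to \varphi(G)$ is central, which follows because $\bar{\varphi}$ cannot in turn factor through $\bar{\sigma}$ without forcing $\varphi$ to factor through $F = \bar{\sigma}\sigma$ and hence $\lambda \in 2X(T)$ --- the same $2$-restrictedness argument applied once more, or equivalently reading the target's type directly off the $p$-morphism of $\theta$, as you indicate. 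One should also note that the target of $\sigma$ is only determined up to central isogeny, but this does not affect the type and so is harmless here.
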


\begin{proof}
Let $d\varphi: {\rm Lie}(G) \to \mathfrak{gl}(V)$ be the differential of $\varphi$, where ${\rm Lie}(G)$ is the Lie algebra of $G$. By a theorem of Curtis \cite{Curtis}, $d\varphi$ is irreducible and thus $d\varphi \ne 0$. Recall that ${\rm Lie}(G)$ has a proper ideal containing all short root vectors. This ideal lies in $\ker(d\varphi)$ if and only if $\la \l, \a\ra=0$ for all short simple roots $\a$ of $G$. Moreover, 
${\rm im}(d\varphi)$ is an ideal of ${\rm Lie}(\varphi(G))$ and the result now follows by inspecting the ideal structure of ${\rm Lie}(G)$ (see \cite{Hoge}, for example).
\end{proof}

\section{Subgroup structure}\label{s:cla}

Let $G$ be a simple algebraic group over an algebraically closed field $K$ of characteristic $p \geqs 0$. By Chevalley's classification theorem for semisimple algebraic groups, the isomorphism type of $G$ is determined by its root datum (see \cite[Section 9.2]{MT}) and thus $G$ is either of classical type or exceptional type. In this article, we will focus on irreducible triples for classical type algebraic groups, in which case it will be useful to have a more geometric description of these groups in order to describe their positive dimensional maximal subgroups.

Let $W$ be a finite dimensional vector space over $K$, equipped with a form $\beta$, which is either the zero form, or a symplectic or nondegenerate quadratic form (in the latter case, nondegenerate means that the radical of the underlying bilinear form is trivial, unless $p=2$ and $\dim W$ is odd, in which case the radical is a nonsingular $1$-space). Assume $\dim W \geqs 2$, and further assume that $\dim W \geqs 3$ and $\dim W \ne 4$ if $\b$ is a quadratic form. Let ${\rm Isom}(W,\beta)$ be the corresponding isometry group, comprising the invertible linear maps on $W$ which preserve $\beta$, and let $G = {\rm Isom}(W,\beta)'$ be the derived subgroup. Then $G$ is a simple classical algebraic group over $K$, namely one of ${\rm SL}(W)$, ${\rm Sp}(W)$ or ${\rm SO}(W)$ (we will sometimes use the notation $G = Cl(W)$ to denote any one of these groups). Note that $Cl(W) = {\rm Isom}(W,\beta) \cap {\rm SL}(W)$, unless $p=2$, in which case ${\rm SO}(W)$ has index $2$ in 
${\rm GO}(W) \cap {\rm SL}(W)$. As discussed in the previous section, we will often adopt the Lie notation for groups with the same root system type, so we will write $A_n$, $B_n$, $C_n$, and $D_n$ to denote a simple algebraic group of classical type. 

The main theorem on the subgroup structure of simple classical algebraic groups is due to Liebeck and Seitz \cite{LS_classical}, which can be viewed as an algebraic group analogue of Aschbacher's celebrated subgroup structure theorem \cite{Asch} for finite classical groups. In order to state this result, we need to introduce some additional notation. Following \cite[Section 1]{LS_classical}, one defines six natural, or \emph{geometric}, collections of closed subgroups of $G$, labelled $\C_1, \ldots, \C_6$, and we set $\C=\bigcup_{i}\C_i$. A rough description of the subgroups in each $\C_i$ collection is given in Table \ref{t:subs} (note that the subgroups comprising the collection $\C_5$ are finite). The following result is \cite[Theorem 1]{LS_classical}. 

\begin{table}
\caption{The $\C_i$ collections}\label{t:subs}

\vspace{4mm}

\begin{tabular}{cl} \hline
 & Rough description \\ \hline
$\C_1$ & Stabilizers of subspaces of $W$ \\
$\C_2$ & Stabilizers of orthogonal decompositions $W=\bigoplus_{i}W_i$, $\dim W_i=a$ \\
$\C_3$ & Stabilizers of totally singular decompositions $W=W_1 \oplus W_2$ \\
$\C_4$ & Stabilizers of tensor product decompositions $W=W_1 \otimes W_2$ \\
& Stabilizers of tensor product decompositions $W=\bigotimes_i W_i$, $\dim W_i=a$ \\
$\C_5$ & Normalizers of symplectic-type $r$-groups, $r \neq p $ prime  \\
$\C_6$ & Classical subgroups \\ \hline
\end{tabular}
\end{table}

\begin{thm}\label{t:ls}
Let $H$ be a closed subgroup of $G = Cl(W)$. Then either
\begin{itemize}
\item[{\rm (i)}] $H$ is contained in a member of $\C$; or
\item[{\rm (ii)}] modulo scalars, $H$ is almost simple and $E(H)$ (the unique quasisimple normal subgroup of $H$) is irreducible on $W$. Further, if $G={\rm SL}(W)$ then $E(H)$ does not fix a nondegenerate form on $W$. In addition, if $H$ is infinite then $E(H)=H^{\circ}$ and $W$ is a 
tensor indecomposable $KH^{\circ}$-module.
\end{itemize}
\end{thm}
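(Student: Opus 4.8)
The plan is to carry out the algebraic group analogue of Aschbacher's reduction argument: assuming $H$ lies in no member of $\C = \bigcup_i \C_i$, I would force the structure described in (ii) through a sequence of reductions, each one ruling out one of the geometric collections. First I would use $\C_1$: since no subspace stabiliser contains $H$, the group $H$ acts irreducibly on $W$, for otherwise it would fix a proper nonzero subspace. Next, the unipotent radical ${\rm R}_u(H^\circ)$ is normal in $H$ and, being unipotent, has a nonzero fixed space on $W$; this fixed space is $H$-invariant, so irreducibility forces ${\rm R}_u(H^\circ)=1$ and hence $H^\circ$ is reductive. I would then write $H^\circ = Z \cdot S_1 \cdots S_k$, where $Z = Z(H^\circ)^\circ$ is a central torus and the $S_i$ are the simple factors of the derived group.

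The heart of the argument is a Clifford-theoretic analysis of $W$ restricted to the normal subgroups of $H$, using the remaining collections to eliminate all product structures. Since $Z$ is central in $H^\circ$ and normal in $H$, its isotypic decomposition on $W$ is permuted by $H$; a nontrivial such decomposition would place $H$ inside a $\C_2$ or $\C_3$ subgroup, so primitivity forces $Z$ to act by scalars. As the scalar subgroup of a classical group is finite, this means $Z$ is trivial and $H^\circ$ is semisimple. Similarly, the commuting actions of the factors $S_1,\ldots,S_k$ yield, by irreducibility, a tensor decomposition $W = W_1 \otimes \cdots \otimes W_k$ into irreducible modules for the $S_i$; if $k>1$ then $H$, which permutes the $S_i$, lies in a $\C_4$ subgroup of tensor product or tensor-induced type, contrary to hypothesis. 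Hence $k=1$, so $H^\circ$ is simple and acts irreducibly. In the finite case ($H^\circ = 1$) the same conclusion is reached through the ordinary generalised Fitting subgroup machinery: a nontrivial $O_r(H)$ of symplectic type would place $H$ in $\C_5$, while several components, or a single component acting non-homogeneously, would again produce a $\C_2$ or $\C_4$ subgroup, leaving $E(H)=F^*(H)$ quasisimple.

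In either case $E(H)$ is the unique quasisimple normal subgroup acting irreducibly on $W$, and $C_H(E(H))$ consists of scalars, so $H$ is almost simple modulo scalars. The two remaining assertions follow from the last collection and from connectedness. If $G={\rm SL}(W)$ and $E(H)$ fixed a nondegenerate form, then $H$ would normalise a classical subgroup and hence lie in $\C_6$, a contradiction; and if $H$ is infinite then the triviality of $Z$ gives $E(H)=H^\circ$, while any tensor decomposition of $W$ as a $KH^\circ$-module would be permuted by $H$ and so once more force $H$ into $\C_4$, establishing tensor indecomposability. The main obstacle is the middle step: carrying out the primitivity and tensor-decomposition analysis rigorously in the algebraic group setting, where one must control the field of definition of the factors $S_i$ and the induced forms with care, and reconcile the connected structure theory with the finite-group Aschbacher dichotomy in the disconnected case.
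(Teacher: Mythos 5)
Your overall strategy is the same Aschbacher-style reduction through the collections $\C_1,\ldots,\C_6$ that the paper sketches for $G={\rm SL}(W)$ and $H$ positive dimensional, and most of the individual reductions are sound (irreducibility from $\C_1$, reductivity from the fixed space of $R_u(H^\circ)$, semisimplicity from the central torus acting as scalars, $\C_6$ for invariant forms, tensor indecomposability from $\C_4$, and the $F^*(H)$ analysis in the finite case). But there is a genuine gap in the middle step for connected groups: you pass from ``$Z=Z(H^\circ)^\circ$ acts by scalars'' directly to ``the commuting factors $S_1,\ldots,S_k$ yield, by irreducibility, a tensor decomposition $W=W_1\otimes\cdots\otimes W_k$ into irreducible $S_i$-modules.'' The irreducibility available at that point is that of $W|_H$, not of $W|_{H^\circ}$, and the tensor factorisation over the simple factors requires the latter. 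Your Clifford analysis is applied only to the central torus $Z$, whose isotypic decomposition is trivial once $H^\circ$ is semisimple, so it cannot detect the two configurations that must be eliminated before the factorisation is legitimate.

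Concretely: (1) $W|_{H^\circ}$ may have several homogeneous components even when $Z=1$ --- for instance $H^\circ$ simple acting as $U\oplus U^{*}$ with the two components swapped by an outer element of $H$ --- and this is the $\C_2$/$\C_3$ reduction, which must be run for $H^\circ$ itself, using the fact that $H$ stabilises the socle of $W|_{H^\circ}$, so that $W|_{H^\circ}$ is completely reducible and $H$ permutes its homogeneous components transitively. (2) $W|_{H^\circ}$ may be homogeneous but reducible, $W|_{H^\circ}=U\oplus\cdots\oplus U$ with multiplicity $m>1$, in which case $W\cong U\otimes M$ with $H^\circ$ acting only on the first tensor factor and $H$ normalising the decomposition; this lands $H$ in a $\C_4$ subgroup of type ${\rm GL}(U)\otimes{\rm GL}(M)$, and $W$ is emphatically not a tensor product of irreducible modules for the $S_i$ alone. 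Your asserted factorisation fails in case (2) and is false in case (1). You do acknowledge exactly these homogeneity issues in your finite-case paragraph, but they have to be dealt with in the connected case as well; the paper's sketch does so explicitly before reducing to the situation where $H^\circ$ is simple and $W|_{H^\circ}$ is irreducible and tensor indecomposable.
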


We refer the reader to \cite{LS_classical} and \cite[Section 2.5]{BGT} for further details on the structure of the geometric subgroups comprising $\mathcal{C}$. We will write $\mathcal{S}$ to denote the collection of closed subgroups of $G$ that arise in part (ii) of Theorem \ref{t:ls} (we sometimes use the term \emph{non-geometric} to refer to the subgroups in $\mathcal{S}$). Note that it is not feasible to determine the members of $\mathcal{S}$, in general. Indeed, as we remarked in the previous section, we do not even know the dimensions of the irreducible modules for a given simple algebraic group. 

\begin{ex}\label{ex:symp}
If $G = {\rm Sp}(W) = {\rm Sp}_{8}(K)$ is a symplectic group, then the positive dimensional subgroups $H \in \mathcal{C} \cup \mathcal{S}$ are as follows:

\begin{itemize}
\item[$\C_1$:]  Here $H$ is either a maximal parabolic subgroup $P_1, \ldots, P_4$, where $P_i$ denotes the stabilizer in $G$ of an $i$-dimensional totally singular subspace of $W$, or $H = {\rm Sp}_{6}(K) \times {\rm Sp}_{2}(K)$ is the stabilizer of a nondegenerate $2$-space. These subgroups are connected.

\item[$\C_2$:] The collection $\mathcal{C}_2$ comprises both the stabilizers of orthogonal decompositions $W = W_1 \perp W_2$, where each $W_i$ is a nondegenerate $4$-space, in which case $H={\rm Sp}_{4}(K) \wr S_2$, together with subgroups of the form $H = {\rm Sp}_{2}(K) \wr S_4$, which stabilize an orthogonal decomposition of $W$ into nondegenerate $2$-spaces.

\item[$\C_3$:]  Here $H = {\rm GL}_{4}(K).2$ is the stabilizer of a direct sum decomposition $W = W_1 \oplus W_2$, where $W_1$ and $W_2$ are totally singular $4$-spaces. 

\item[$\C_4$:]  This collection is empty if $p=2$ (see \cite[p.429]{LS_classical}), so let us assume $p \ne 2$. Here the natural module $W$ admits a tensor product decomposition $W = W_1 \otimes W_2$, where $W_1$ is a $2$-dimensional symplectic space and $W_2$ is a $4$-dimensional orthogonal space. The stabilizer of this decomposition is a (disconnected) central product ${\rm Sp}(W_1) \otimes {\rm GO}(W_2)$. In addition, the remaining subgroups in $\C_4$ are of the form 
\[
({\rm Sp}_{2}(K)\otimes {\rm Sp}_{2}(K) \otimes {\rm Sp}_{2}(K)).S_3,
\]
stabilizing a decomposition $W = W_1 \otimes W_2 \otimes W_3$ into a tensor product of symplectic $2$-spaces.  

\item[$\C_6$:]  If $p=2$ then $H = {\rm GO}(W) = {\rm SO}(W).2$ is the stabilizer of a nondegenerate quadratic form on $W$ (the collection is empty if $p \ne 2$). Note that we view $H$ as a geometric subgroup, even though it satisfies the criteria for membership in $\mathcal{S}$ (see Remark \ref{r:sdef}).

\item[$\mathcal{S}$:]  Here $p \ne 2,3,5,7$ and $H={\rm SL}(U) = {\rm SL}_{2}(K)$ is irreducibly embedded in $G$ via the $8$-dimensional irreducible $KH$-module ${\rm Sym}^7(U)$, the seventh symmetric power of $U$.  
\end{itemize}
\end{ex}

To conclude this section, let us provide a brief sketch of the proof of Theorem \ref{t:ls} in the special case where $G = {\rm SL}(W)$ and $H$ is positive dimensional.

\begin{trivlist}\item[]
\textbf{Sketch proof of Theorem \ref{t:ls}} \
Let us assume $G = {\rm SL}(W)$ and $H$ is positive dimensional. First observe that if $H$ fixes a nondegenerate form on $W$, then $H$ 
lies in a classical subgroup of $G$ belonging to the collection $\C_6$. Therefore, we may assume otherwise. If $H$ acts reducibly on $W$, then $H$ lies in the stabilizer of a proper non-zero subspace of $W$, which is a parabolic subgroup in the collection $\C_1$.

Now assume $H$ acts irreducibly on $W$ and consider the action of the connected component $H^\circ$ on $W$. Since $H$ stabilizes the socle of $W|_{H^\circ}$, it follows that $W|_{H^\circ}$ is completely reducible. Moreover, the irreducibility of $W|_H$ implies that $H$ transitively permutes the homogeneous components of $W|_{H^\circ}$ and thus $H$ stabilizes a direct sum decomposition of the form 
$W = W_1\oplus\cdots\oplus W_t$,
where each $W_i$ is $m$-dimensional for some $m$. In particular, 
$H \leqs ({\rm GL}_m(K) \wr S_t) \cap G$. If $W|_{H^\circ}$ has at least two  homogeneous components (that is, if $t \geqs 2$), then $H$ is contained in a $\C_2$-subgroup of $G$. 

To complete the argument, we may assume $W|_{H^\circ}$ is homogeneous. If $W|_{H^\circ}$ is reducible then $W|_{H^\circ} = U \oplus\cdots\oplus U$ for some irreducible $KH^\circ$-module $U$. Here $\dim U \geqs 2$ (since $H$ is positive dimensional and $Z(G)$ is finite) and further argument shows that $W$ admits a tensor product decomposition $W=W_1\otimes W_2$ such that  $H^{\circ} \leqs  {\rm GL}(W_1) \otimes 1$ and 
$H \leqs N_G({\rm SL}(W_1)\otimes {\rm SL}(W_2))$. In this situation, we conclude that $H$ is contained in a member of the collection $\C_4$. 

Finally, let us assume $H^\circ$ acts irreducibly on $W$. 
Then $H^\circ$ is necessarily reductive as otherwise $H^\circ$ stabilizes the nonzero set of fixed points of its unipotent radical. We then deduce that $H^\circ$ is in fact
semisimple as $Z(H^\circ)$ must act as scalars on $W$ and is therefore finite. So we now have $H^\circ=H_1\cdots H_r$, a commuting
product of simple algebraic groups, and $W|_{H^\circ} =
W_1\otimes\cdots\otimes W_r$, where $W_i$ is an irreducible
$KH_i$-module for each $i$. If $r>1$ then $H$ is contained in the
stabilizer of the tensor product decomposition
$W_1\otimes\cdots\otimes W_r$, which is in the $\C_4$
collection. Finally, suppose $r=1$, so $H^\circ$ is simple. If
$W|_{H^\circ}$ is tensor decomposable then once again we deduce that
$H$ is contained in a $\C_4$-subgroup. Otherwise, $H$ satisfies the
conditions in part (ii) of the theorem and thus $H$ is a member of the
collection $\mathcal{S}$. \hspace *{\fill}$\Box$
\end{trivlist}

\section{Connected subgroups}\label{s:con}

Let $G = Cl(W)$ be a simple classical algebraic group over an arbitrary algebraically closed field $K$ of characteristic $p \geqs 0$. In this section we are interested in the irreducible triples of the form $(G,H,V)$, where $H$ is a closed connected subgroup of $G$ and $V$ is a nontrivial irreducible $p$-restricted rational $KG$-module. The main theorem is due to Seitz \cite{Seitz2}, extending earlier work of Dynkin \cite{Dynkin0, Dynkin} in characteristic zero, which we recall below. We also report on recent work of Cavallin and Testerman \cite{CT}, which has identified a gap in the proof of Seitz's theorem, leading to a new family of irreducible triples. We will explain how the gap arises in \cite{Seitz2} and with a view towards future applications, we will present a corrected version of Seitz's theorem, based on the work in \cite{CT}. 

\subsection{Seitz's theorem}

As described in Section \ref{s:intro}, the determination of the maximal closed positive dimensional
subgroups of the classical algebraic groups relies on a detailed study of irreducible triples. This problem was first investigated in the pioneering work of Dynkin in the 1950s, in the context of complex semisimple Lie algebras. In \cite{Dynkin0,Dynkin}, Dynkin classifies the triples 
$({\mathfrak g},{\mathfrak h},V)$, where ${\mathfrak g}$ is a simple complex  Lie algebra, ${\mathfrak h}$ is a semisimple subalgebra of $\mathfrak g$ and $V$ is an irreducible $\mathfrak g$-module on which $\mathfrak h$ acts irreducibly. For a classical type Lie algebra $\mathfrak g$, his classification includes $8$ infinite families of natural embeddings ${\mathfrak h}\subset{\mathfrak g}$, such as ${\mathfrak {sp}}_{2m}\subset{\mathfrak {sl}_{2m}}$, and in each case he lists the $\mathfrak g$-modules on which $\mathfrak h$ acts irreducibly (in terms of highest weights). In most cases, the module is some symmetric or exterior power of the natural module for $\mathfrak g$.  In addition, there are $36$ isolated examples $({\mathfrak g}, {\mathfrak h}, V)$ with ${\mathfrak g}$ a classical Lie algebra of fixed rank. For example, by embedding the simple Lie algebra $\mathfrak h$ of type $E_6$ in ${\mathfrak {sl}}_{27}={\mathfrak {sl}}(W)$ via one of its irreducible $27$-dimensional representations, he shows that $\mathfrak h$ acts irreducibly on $\Lambda^i(W)$ for $i=2,3,4$. There are exactly $4$ pairs $({\mathfrak g}, {\mathfrak h})$ in Dynkin's theorem for $\mathfrak g$ of exceptional type. Dynkin's classification immediately yields a classification of irreducible triples $(G,H,V)$, where $G$ is a simple algebraic group defined over $\mathbb C$ and $H$ is a proper closed connected subgroup.

In the 1980s, Seitz \cite{Seitz2} (classical groups) and Testerman \cite{Test1} (exceptional groups) extended Dynkin's analysis to all simple algebraic groups~$G$ over all algebraically closed fields. Here, in the positive characteristic setting, several major difficulties arise that are not present in the work of Dynkin, so a completely different approach is needed. For instance, rational modules need not be completely reducible and we have already mentioned that there is no dimension formula for irreducible modules. Similarly, in positive characteristic, irreducible modules may be tensor decomposable (this is clear from Steinberg's tensor product theorem) and this significantly complicates the analysis. 

In order to proceed, it is first useful to observe that if $(G,H,V)$ is an irreducible triple with $H$ connected then $H$ is semisimple. 
Seitz's main technique in \cite{Seitz2} is induction. Given a parabolic subgroup $P_H$ of $H$, the Borel-Tits theorem (see \cite[Corollary 3.9]{BT}) implies the existence of a parabolic subgroup $P$ of $G$ such that $P_H<P$ and $Q_H=R_u(P_H)<R_u(P)=Q$. Let $L_H$ and $L$ be Levi factors of $P_H$ and $P$, respectively. By \cite[(2.1)]{Seitz2}, if $V|_{H}$ is irreducible then $L_H'$ acts irreducibly on the quotient $V/[V,Q]$, which is an irreducible $KL'$-module (here $[V,Q] = \la qv-v \mid v \in V, q \in Q \ra$). Moreover, the highest weight of $V/[V,Q]$ as a $KL'$-module is the restriction of the highest weight of $V$ to an appropriate maximal torus of $L'$ (this is a variation of a result of Smith \cite{Smith}). Thus, Seitz proceeds by induction on the rank of $H$, treating the base case $H=A_1$ by ad hoc methods, exploiting the fact that all weights of an irreducible $KA_1$-module have multiplicity one.

In the main theorem of \cite{Seitz2} (which includes Testerman's results from \cite{Test1} for exceptional groups), the embedding of $H$ in $G$ is described in terms of the action of $H$ on the natural $KG$-module $W$, in case $G$ is of classical type. For an exceptional group $G$,  an explicit construction of the embedding is required, as given in \cite{Test1,Test2}.
Further, the highest weights of $V|_{H}$ and $V = L_G(\l)$ are recorded in terms of fundamental dominant weights for $H$ and $G$ (via labelled Dynkin diagrams). 

Comparing the main theorems of \cite{Dynkin0,Dynkin} and \cite{Seitz2}, we observe that 
Dynkin's classification of triples (for $K = \mathbb{C}$) is a subset of the list of irreducible triples that arise in positive characteristic $p>0$, with some additional conditions depending on $p$. For example, consider the natural embedding of $H={\rm Sp}(W)$ in $G={\rm SL}(W)$. In  characteristic $0$, $H$ acts irreducibly on the symmetric power ${\rm Sym}^i(W)$ for all $i\geqs 0$, whereas in positive characteristic we need the condition 
$0 \leqs i < p$. In addition, there are infinite families of triples that only arise in positive characteristic. For instance, if we take $H<G$ as above and assume $p>2$, then $H$ also acts irreducibly on the $KG$-modules with highest weight $\lambda = a\lambda_k+b\lambda_{k+1}$, where 
\[
\mbox{$1\leqs k<\frac{1}{2}\dim W$ and $a+b=p-1$, with $a\ne 0$ if 
$k=\frac{1}{2}\dim W - 1$}
\]
(see the case labelled ${\rm I}_{1}'$ in \cite[Table 1]{Seitz2}). In positive characteristic we also find that there are two entirely new families of irreducible triples with $(G,H) = (E_6,F_4)$; see the cases labelled ${\rm T}_1$ and ${\rm T}_2$ in \cite[Table 1]{Seitz2}.

\subsection{New irreducible triples}\label{ss:new}

It has very recently come to light that there is a gap in the proof of Seitz's main theorem \cite{Seitz2}, which leads to a new family of irreducible triples. As explained by Cavallin and Testerman in \cite{CT}, the mistake arises in the proof of \cite[(8.7)]{Seitz2}, which concerns the special case where $G = D_{n+1}$ and $H = B_{n}$ with $n \geqs 2$ and $H$ is embedded in $G$ in the usual way (as the stabilizer of a nonsingular $1$-space); this is the case labelled ${\rm IV}_1'$ in \cite[Table 1]{Seitz2}. In the proof of \cite[(8.7)]{Seitz2}, Seitz defines a certain weight vector in $V = L_G(\l)$ of weight different from~$\lambda$ and shows that this vector is annihilated by all simple root vectors in the Lie algebra of $H$, from which he deduces that $V|_{H}$ is reducible. However, if the coefficients in $\l$ satisfy certain congruence conditions, then this vector is in fact zero and so does not give rise to a second composition factor of $V|_{H}$ as claimed. 

The proof of \cite[(8.7)]{Seitz2} is corrected in \cite{CT}, where all the irreducible triples with $(G,H) = (D_{n+1}, B_{n})$ are determined. This is where the new family of examples arises. In Table \ref{tab:41}, the first row corresponds to the original version of Case ${\rm IV}_1'$ in \cite[Table 1]{Seitz2}, which is visibly a special case of the corrected version given in the second row when $n \geqs 3$.

\begin{table}
\caption{Case ${\rm IV}_1'$ in \cite[Table 1]{Seitz2} (see (a)) and the corrected version (b)}
\label{tab:41}

\vspace{-3mm}

{\small
$$\begin{array}{llll} \hline
& V|_{H} & V & \mbox{Conditions} \\ \hline
{\rm (a)}  & 
\begin{tikzpicture}[scale=.35, baseline=(current bounding box.base)]
   \tikzstyle{every node}=[font=\small]
   \draw (2,1.4) node[anchor=north] {$a_i$};
   \draw (6,1.4) node[anchor=north] {$a_n$};
    \foreach \x in {0,...,3}
    \draw[xshift=\x cm,fill=black] (\x cm,0) circle (.25cm);
    \draw[xshift=3 cm,fill=black] (3 cm, 0) circle (.25cm);
    \draw[dotted,thick] (0.55cm,0) -- +(1cm,0);
    \draw[dotted,thick] (2.55,0) -- +(1cm,0);
    \draw (4.2 cm, .1 cm) -- +(1.6 cm,0);
    \draw (4.2 cm, -.1 cm) -- +(1.6 cm,0);
    \draw (5.3,0) --+ (-0.7,0.4);
    \draw (5.3,0) --+ (-0.7,-0.4);
  \end{tikzpicture}
   & 
   \begin{tikzpicture}[scale=.35, baseline=(current bounding box.base)]
   \tikzstyle{every node}=[font=\small]
   \draw (2,1.4) node[anchor=north] {$a_i$};
   \draw (7.3,1.2) node[anchor=east] {$a_n$};
     \foreach \x in {0,...,2}
    \draw[xshift=\x cm,fill=black] (\x cm,0) circle (.25cm);
    \draw[xshift=4 cm,fill=black] (30: 17 mm) circle (.25cm);
    \draw[xshift=4 cm,fill=black] (-30: 17 mm) circle (.25cm);
     \draw[dotted,thick] (0.55cm,0) -- +(1cm,0);
    \draw[dotted,thick] (2.55cm,0) -- +(1cm,0);
    \draw[xshift=4 cm] (30: 2mm) -- (30: 16mm);
    \draw[xshift=4 cm] (-30: 2mm) -- (-30: 16mm); 
       \end{tikzpicture} 
       & \hspace{-1.5mm} \begin{array}{l} \mbox{$a_ia_n \ne 0$, $1 \leqs i < n$ and} \\ a_i+a_n+n-i \equiv 0 \ \text{(mod~$p$)} \end{array} \\
              & & & \\
{\rm (b)}  & \hspace{-1.8mm}
   \begin{tikzpicture}[scale=.35, baseline=(current bounding box.base)]
  \tikzstyle{every node}=[font=\small]
  \draw (0,1.4) node[anchor=north] {$a_1$};
   \draw (2,1.4) node[anchor=north] {$a_2$};
   \draw (5.8,1.4) node[anchor=north] {$a_{n-1}$};
   \draw (8,1.4) node[anchor=north] {$a_n$};
    \foreach \x in {0,...,1}
    \draw[xshift=\x cm,fill=black] (\x cm,0) circle (.25cm);
    \draw[xshift=3 cm,fill=black] (3 cm, 0) circle (.25cm);
    \draw[xshift=5 cm,fill=black] (3 cm, 0) circle (.25cm);
    \draw[dotted,thick] (2.55cm,0) -- +(3cm,0);
    \foreach \y in {0.1,...,0.1}
    \draw[xshift=\y cm] (\y cm,0) -- +(1.6cm,0);
    \draw (6.2 cm, .1 cm) -- +(1.6 cm,0);
    \draw (6.2 cm, -.1 cm) -- +(1.6 cm,0);
    \draw (7.3,0) --+ (-0.7,0.4);
    \draw (7.3,0) --+ (-0.7,-0.4);
         \end{tikzpicture}   & \hspace{-1.8mm}
   \begin{tikzpicture}[scale=.35, baseline=(current bounding box.base)]
   \tikzstyle{every node}=[font=\small]
   \draw (0,1.4) node[anchor=north] {$a_1$};
   \draw (2,1.4) node[anchor=north] {$a_2$};
   \draw (5.4,1.4) node[anchor=north] {$a_{n-1}$};
   \draw (9.4,1.2) node[anchor=east] {$a_{n}$};
   \draw (8.9,-1.2) node[anchor=east] {};
    \foreach \x in {0,...,1}
    \draw[xshift=\x cm,fill=black] (\x cm,0) circle (.25cm);
    \draw[xshift=2 cm,fill=black] (4,0) circle (.25cm);
    \draw[xshift=6 cm,fill=black] (30: 17 mm) circle (.25cm);
    \draw[xshift=6 cm,fill=black] (-30: 17 mm) circle (.25cm);
    \draw[dotted,thick] (2.55cm,0) -- +(3cm,0);
    \foreach \y in {0.1,...,0.1}
    \draw[xshift=\y cm] (\y cm,0) -- +(1.6cm,0);
    \draw[xshift=6 cm] (30: 2mm) -- (30: 16mm);
    \draw[xshift=6 cm] (-30: 2mm) -- (-30: 16mm);   
    \end{tikzpicture}
& \hspace{-1.5mm} \begin{array}{l}  
\mbox{$a_n \ne 0$, and} \\
\mbox{if $1 \leqs i < j \leqs n$, $a_ia_j \ne 0$ and} \\
\mbox{$a_k=0$ for all $i<k<j$, then}\\
\mbox{$a_i+a_j+j-i \equiv 0 \ \text{(mod~$p$)}$} 
\end{array} \\ \hline
       \end{array}$$}
       \end{table}

Since the proofs of the main theorems of \cite{Seitz2,Test1} are inductive, the existence of a new family of examples changes the inductive hypothesis, leading to the possible existence of additional irreducible triples. It is shown in \cite{CT} that the new examples with $(G,H) = (D_{n+1},B_{n})$ have no further influence on the statements of the main theorems in \cite{Seitz2, Test1}, under the assumption that the remainder of Seitz's proof is valid (see Hypothesis 1.4 of \cite{CT} for further details).

In \cite{CT}, Cavallin and Testerman identify another missing case $(G,H,V)$ in the statement of Seitz's main theorem. Here $G = C_4$, $H = B_3$, $p=2$ and $V$ is the $48$-dimensional irreducible $KG$-module with highest weight $\l_3$:
$$\begin{array}{lll}
V|_{H} = \begin{tikzpicture}[scale=.4, baseline=-0.8mm]
   \tikzstyle{every node}=[font=\small]
   \draw (0,1.4) node[anchor=north] {$1$};
   \draw (4,1.4) node[anchor=north] {$1$};
    \foreach \x in {0,...,2}
    \draw[xshift=\x cm,fill=black] (\x cm,0) circle (.25cm);
    \draw[xshift=2 cm,fill=black] (2 cm, 0) circle (.25cm);
    \foreach \y in {0.1,...,0.1}
    \draw[xshift=\y cm] (\y cm,0) -- +(1.6cm,0);
    \draw (2.2 cm, .1 cm) -- +(1.6 cm,0);
    \draw (2.2 cm, -.1 cm) -- +(1.6 cm,0);
    \draw (3.3,0) --+ (-0.7,0.4);
    \draw (3.3,0) --+ (-0.7,-0.4);
           \end{tikzpicture}
& \hspace{2mm}  &
V = \begin{tikzpicture}[scale=.4, baseline=-0.95mm]
   \tikzstyle{every node}=[font=\small]
   \draw (4,1.4) node[anchor=north] {$1$};
    \foreach \x in {0,...,2}
    \draw[xshift=\x cm,fill=black] (\x cm,0) circle (.25cm);
    \draw[xshift=3 cm,fill=black] (3 cm, 0) circle (.25cm);
    \foreach \y in {0.1,...,1.1}
    \draw[xshift=\y cm] (\y cm,0) -- +(1.6cm,0);
    \draw (4.2 cm, .1 cm) -- +(1.6 cm,0);
    \draw (4.2 cm, -.1 cm) -- +(1.6 cm,0);
    \draw (4.8,0) --+ (0.72,0.4);
    \draw (4.8,0) --+ (0.72,-0.4);
             \end{tikzpicture}  
\end{array}$$

As explained in \cite{CT}, this is an isolated example. 
Here $H=B_3$ is contained in a maximal rank subgroup of type $D_4<C_4$, both acting irreducibly on $V$. The triples 
$(C_4,D_4,V)$ and $(D_4,B_3,V|_{D_4})$ both appear in \cite[Table 1]{Seitz2}, but 
$(C_4,B_3,V)$ has been omitted. 
This oversight occurs in  the proof of \cite[(15.13)]{Seitz2}. In the first part of the argument,  there is a reduction to the case where 
$H$ acts irreducibly with highest weight~$\delta_{3}$ (the third fundamental dominant weight for~$H$) on the natural $KG$-module, so $G=C_4$ or $D_4$. Now arguing with an appropriate parabolic embedding and using induction as usual, Seitz deduces that 
$V = L_G(\lambda_3)$. But here he concludes that $V$ is a spin module for $G$, and so he is only considering the embedding of $H$ in $D_4$, omitting to include the case of $H$ in $C_4$.

\subsection{A revised version of Seitz's theorem}

In view of \cite{CT}, together with the discussion in Section \ref{ss:new}, we are now in a position to state a corrected (and slightly modified) version of \cite[Theorem 1]{Seitz2}. Note that Table $3$ is located in Section \ref{s:tab}.

\begin{thm}\label{t:seitz}
Let $G$ be a simply connected simple algebraic group over an algebraically closed field $K$ of characteristic $p \geqs 0$. Let $\varphi: G \to {\rm SL}(V)$ be an irreducible, rational, tensor indecomposable $p$-restricted representation and assume that $V \ne W, W^*$ if $G = Cl(W)$ is a classical group. Let $H$ be a closed connected proper subgroup of $G$. Then $V|_{H}$ is irreducible if and only if $(\varphi(G),\varphi(H),V)$ is one of the cases recorded in Table $3$. 
\end{thm}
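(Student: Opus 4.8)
The plan is to follow the inductive strategy of Seitz \cite{Seitz2}, incorporating the corrections of \cite{CT}, and to argue by induction on the rank of $H$. First I would record the standard reductions. Since $V|_{H}$ is irreducible with $H$ connected, $H$ is semisimple (as observed above): a nontrivial unipotent radical would have a nonzero fixed space stabilized by $H$, while the connected centre acts as scalars and so is trivial. Thus I may assume $H = H_1 \cdots H_r$ is a commuting product of simple factors, and by Steinberg's tensor product theorem together with the remarks in Section \ref{s:rep} it suffices to treat the $p$-restricted tensor indecomposable module $V = L_G(\lambda)$, which is precisely the hypothesis. When $G = Cl(W)$ is classical, the embedding $H < G$ is encoded by the action of $H$ on the natural module $W$, so the data of the triple amounts to $(G, H \hookrightarrow {\rm GL}(W), \lambda)$; the exceptional groups are handled separately as in \cite{Test1, Test2}. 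The base of the induction is $H = A_1$: here every weight of an irreducible $KA_1$-module has multiplicity one, and a direct comparison of weight multiplicities between $V|_{H}$ and $L_{A_1}(\lambda|_{T_{H}})$ pins down the admissible $\lambda$ and the embedding.

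The inductive engine is the parabolic-restriction mechanism. For each maximal parabolic $P_H$ of $H$ (obtained by deleting a node of the Dynkin diagram of $H$), the Borel--Tits theorem (see \cite[Corollary 3.9]{BT}) supplies a parabolic $P = QL$ of $G$ with $P_H < P$ and $Q_H = R_u(P_H) < Q = R_u(P)$. By \cite[(2.1)]{Seitz2}, irreducibility of $V|_{H}$ forces $L_H'$ to act irreducibly on $V/[V,Q]$, which is an irreducible $KL'$-module whose highest weight is the restriction of $\lambda$ to a maximal torus of $L'$ (the variation of Smith's theorem \cite{Smith}). Since $L_H'$ is semisimple of strictly smaller rank than $H$, the inductive hypothesis applies to each triple $(L', L_H', V/[V,Q])$, after reapplying the Steinberg reduction if necessary, and returns one of the configurations of Table $3$. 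Running this over all maximal parabolics of $H$ produces a family of compatible local constraints on the labelled diagram of $\lambda$ and on the way $L_H'$ sits inside $L'$; the combinatorial task is then to show that these local data glue to a single global configuration in Table $3$.

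With the candidate highest weights and embeddings in hand, the remaining step is to decide, for each candidate, whether $V|_{H}$ is genuinely irreducible. For the "if" direction I would compute $\dim L_H(\mu)$ with $\mu = \lambda|_{T_{H}}$ and compare it with $\dim V$, exhibiting the isomorphism directly for the entries of Table $3$ (the symmetric and exterior powers, the spin modules, the $(E_6, F_4)$ cases and the new $(D_{n+1}, B_n)$ family). For the "only if" direction, the standard device for a putative non-example is to construct a nonzero weight vector $v \in V$ of weight strictly below $\lambda$ that is annihilated by every simple root vector of ${\rm Lie}(H)$; such a $v$ is an $H$-maximal vector, yields a second composition factor, and so shows $V|_{H}$ is reducible. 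This is organised according to the type of $G$ and the node of $H$ involved, and Lemma \ref{l:images} is exactly what is needed to control the $B_n/C_n$ image in characteristic $2$.

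The main obstacle lies in this last verification in positive characteristic, and it is precisely here that the gap in \cite[(8.7)]{Seitz2} arose. Because there is no dimension formula for $L_G(\lambda)$ when $p > 0$ and rational modules need not be completely reducible, the naive dimension comparison must be replaced by a careful analysis of weight strings and multiplicities, and the candidate second maximal vector $v$ can unexpectedly vanish. Whether $v = 0$ depends on congruence conditions modulo $p$ among the coefficients $a_i$ of $\lambda$; when these hold, $v = 0$, no second composition factor appears, and the triple is in fact irreducible. The delicate point is therefore to determine exactly the congruences governing the vanishing of $v$ --- for the $(D_{n+1}, B_n)$ family these are the conditions in row (b) of Table \ref{tab:41} --- and then, as in \cite{CT}, to re-run the induction with the enlarged inductive hypothesis and confirm that it produces no further families beyond those in Table $3$, together with the isolated example $(C_4, B_3, L_{C_4}(\lambda_3))$ in characteristic $2$.
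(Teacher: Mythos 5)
Your proposal reproduces the strategy that the paper itself attributes to \cite{Seitz2,Test1} as corrected by \cite{CT} --- induction on the rank of $H$ via parabolic embeddings and the Borel--Tits theorem, Smith's theorem applied to $V/[V,Q]$, the multiplicity-one base case $H=A_1$, and the construction (and possible vanishing, governed by congruences modulo $p$) of a second maximal vector --- which is exactly how the survey justifies Theorem \ref{t:seitz}, so the approaches are essentially the same. The one caveat is that your dimension comparison for the ``if'' direction must, as you yourself note in the final paragraph, be replaced in positive characteristic by weight-multiplicity and weight-string arguments, since no dimension formula for $L_G(\lambda)$ is available there.
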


\begin{re}\label{r:seitz}
Let us make some detailed comments on the statement of this theorem. We refer the reader to Remark \ref{r:tab5} in Section \ref{s:tab} for further guidance on how to read Table $3$ and the notation therein.

\begin{itemize}
\item[(a)] First, let us consider the hypotheses. As explained in Section \ref{s:rep}, it is natural to assume that the irreducible $KG$-module $V$ is both $p$-restricted and tensor indecomposable. The additional condition $V \ne W, W^*$ when $G = Cl(W)$ is a classical group is unavoidable. For example, it is not feasible to determine the simple subgroups of $G$ that act irreducibly on the natural module. 

\vspace{2mm}

There is one particular case worth highlighting here,
namely when $G=B_2=C_2$. Here there are ``two'' natural modules, the $4$-dimensional symplectic 
module and the $5$-dimensional orthogonal module (if $p\ne 2$). There is in fact one example of a 
positive dimensional subgroup $H < G$ acting irreducibly on both of these modules, namely $H=A_1$, when $p \ne 2,3$. This example is not recorded in Seitz's original table, and for the reasons explained above, we do not include it in Table $3$.

\item[(b)] To be consistent with \cite[Theorem 1]{Seitz2}, we have stated Theorem \ref{t:seitz} in terms of a representation $\varphi:G \to {\rm SL}(V)$, and the triples in Table $3$ are given in the form $(\varphi(G),\varphi(H),V)$, rather than $(G,H,V)$. Notice that the only possible difference in recording the triples in this manner arises when $p=2$ and $G$ (or $H$) is of type $B_n$ or $C_n$, in which case the type of $\varphi(G)$ (and $\varphi(H)$) can be determined by applying Lemma \ref{l:images}. 

\vspace{2mm}

For example, consider the case labelled ${\rm IV}_9$ in \cite[Table 1]{Seitz2}, with $p=2$.  Set $H = C_4$ and let $\{\delta_1, \ldots, \delta_4\}$ be a set of fundamental dominant weights for $H$. Consider the $16$-dimensional $KH$-module $W = L_H(\delta_4)$ and let $\rho: H \to {\rm SL}(W)$ be the corresponding representation. Then 
$\rho(H)$ preserves a nondegenerate quadratic form on $W$, so we have 
$\rho(H)<G=D_8$. (This follows, for example, from \cite[2.4]{Sin-Willems} or \cite[10.1.1]{gari-nakano} and the structure of the associated Weyl module.) Let 
$\varphi: G \to {\rm SL}(V)$ be the irreducible representation of highest weight $\lambda_7$. Then $\rho(H)$ acts irreducibly on $V$. By applying Lemma \ref{l:images}, we see that $\rho(H)$ is a simple algebraic group of type $B_4$. Moreover, $V|_{B_4}$ has highest weight $\eta_1+\eta_4$ (where $\eta_i$, $1\leqs i\leqs 4$, are the fundamental dominant weights for $B_4$), and a further application of Lemma \ref{l:images} implies that $(\varphi(G),\varphi(\rho(H)))$ is indeed $(D_8,B_4)$, as listed in Table $3$.

\item[(c)] Clearly, $V|_{H}$ is irreducible if and only if $V^*|_{H}$ is irreducible. Therefore, in order to avoid unnecessary repetitions, the triples in Table $3$ are recorded up to duals. To clarify this set-up, it may be helpful to recall that 
\[
L_G(\l)^* \cong L_G(-w_0(\l))
\] 
for any dominant weight $\lambda$, where $w_0$ is the longest element of the Weyl group $W(G)$. The action of $w_0$ on the root system of $G$ is as follows:
\[
w_0 = \left\{\begin{array}{ll}
-1 & \mbox{for $G$ of type $A_1$, $B_n$, $C_n$, $D_n$ ($n$ even), $G_2$, $F_4$, $E_7$ or $E_8$} \\
-\tau & \mbox{for $G$ of type $A_n$ $(n\geqs 2)$, $D_n$ ($n$ odd, $n \geqs 3$) or $E_6$,}
\end{array}\right.
\]
where $\tau$ is the standard involutory symmetry of the corresponding Dynkin diagram. In particular, note that every $KG$-module $L_G(\l)$ is self-dual when $w_0=-1$. For the cases where $V=L_G(\lambda)$ is self-dual, we must consider separately the action of a fixed subgroup on the module whose highest weight 
is given by $\tau(\lambda)$, when the diagram of $G$ does admit an involutory symmetry. This is usually straightforward and is discussed in the next item.  

\item[(d)] In view of the previous comment, the cases where $G = D_{2m}$ and the highest weight~$\lambda$ of $V$ is not invariant under the symmetry $\tau$ require special consideration. The modules $L_{G}(\lambda)$ and $L_{G}(\tau(\lambda))$ are self-dual and so we need to determine if a given subgroup $H<G$ acts irreducibly on both of them.
In the notation of \cite[Table~1]{Seitz2}, this concerns the following cases:
\begin{equation}\label{e:list}
{\rm IV}_1, \:{\rm IV}_1', \; {\rm IV}_2, \; {\rm IV}_{2}', \; {\rm IV}_3, \; {\rm IV}_9, \; {\rm S}_6.
\end{equation}

\vspace{2mm}

First consider the case labelled ${\rm IV}_9$, where $(G,H) = (D_8,B_4)$ and $p \ne 3$. By considering weight restrictions, we find that $H$ acts irreducibly on exactly one of the two spin modules (and therefore, a non-conjugate copy of $H$, namely the image of $H$ under an involutory graph automorphism of $G$, acts irreducibly on the other spin module). For the other cases in \eqref{e:list}, $H$ lies in the fixed point subgroup of 
an outer automorphism of $G$, and hence acts irreducibly (with the same highest weight) on both $L_G(\l)$ and $L_G(\tau(\l))$. 

\vspace{2mm}

In the latter situation, where $H$ acts irreducibly on both spin modules, we only record $L_G(\l_{2m-1})$ in Table $3$. In order to distinguish the special case ${\rm IV}_9$, we write $(\dagger)$ in the second column to indicate that $H$ only acts irreducibly on one of the spin modules.

\item[(e)] The case $G=D_4$ requires special attention. If $p\ne 3$, the group $H=A_2$ embeds
in $G$, via the irreducible adjoint representation. In \cite[Table 1]{Seitz2}, Seitz does not 
include the action of $H$ on the two $8$-dimensional spin modules for $D_4$, which are the images of the natural module under a triality graph automorphism. However, by considering weight restrictions we see that $H$
acts irreducibly on all three $8$-dimensional irreducible $KG$-modules. This configuration satisfies the hypotheses of Theorem \ref{t:seitz}, so it is included in Table $3$ (see the case labelled ${\rm S}_{11}$). Since $H$ acts irreducibly on both spin modules, we just list the $KG$-module with highest weight $\l_3$, as per the convention explained in (d).

\item[(f)] Notice that we include exceptional type algebraic groups in the statement of the theorem. This is consistent with the statement of \cite[Theorem 1]{Seitz2}, which incorporates Testerman's results on exceptional groups in \cite{Test1}. Note that the three question marks appearing in \cite[Table 1]{Seitz2} can be removed (the existence of these embeddings in $E_6$ and $F_4$ was established by Testerman in \cite{Test2}). 

\item[(g)]  In \cite[Theorem 4.1]{Seitz2}, Seitz determines all irreducible triples $(G,H,V)$ such that $H$ contains a maximal torus of $G$. These occur precisely when $(G,p)$ is one of the following:
\[
(B_n,2), \, (C_n,2), \, (F_4,2), \, (G_2,3)
\]
and the root system of $H$ (which is naturally viewed as a subsystem of the root system of $G$) contains either all long or all short roots of the root system of $G$. In this situation, $H$ acts irreducibly on $L_G(\l)$ if and only if $\l$ has support on the long, respectively short, roots.
It is well known that a subgroup corresponding to a short root system is conjugate, by an exceptional graph morphism, to a subgroup corresponding to a long root subsystem. In \cite[Table 1]{Seitz2}, Seitz records only 
one of the configurations in each case, namely those corresponding to short roots. However, we include both configurations in Table $3$, with the additional cases  
${\rm MR}_1'$ and ${\rm MR}_2'$ covering the situation where $H$ corresponds to a 
subsystem containing the long roots. We also adopt the standard notation $\widetilde{X}$ to denote a subsystem subgroup $X<G$ corresponding to short roots of $G$.

\vspace{2mm}

Since the cases in Table $3$ are recorded in terms of the image groups under the corresponding representations (see part (b) above), there is no need to list any additional cases for the configurations ${\rm MR}_3$ and ${\rm MR}_4$. For example, let us consider ${\rm MR}_3$, where $G = F_4$ and $p=2$. Let $V$ be the irreducible $KG$-module with highest weight $a\lambda_1+b\lambda_2$ and let $\varphi:G \to {\rm SL}(V)$ be the corresponding representation. By \cite[Theorem  4.1]{Seitz2}, the long root subsystem subgroup of type $H=B_4$ acts irreducibly on $V$, but $\varphi(H)$ is a subgroup of type $C_4$ (see Lemma \ref{l:images}), so this configuration is listed as ${\rm MR}_3$.

\item[(h)] Finally, let us note that we have included the new cases from \cite{CT}, as discussed in  Section \ref{ss:new}. In particular, the conditions in case ${\rm IV}_1'$ have been corrected, and we have added ${\rm S}_{10}$ for $(G,H,p) = (C_4,B_3,2)$. This is in addition to the case ${\rm S}_{11}$ mentioned above in part (e).
\end{itemize}
\end{re}

\section{Disconnected subgroups}\label{s:disc}

Let us continue to assume that $G$ is a simply connected simple algebraic group over an algebraically closed field $K$ of characteristic $p \geqs 0$. It is natural to seek an extension of Seitz's main theorem, which we discussed in the previous section, to all positive dimensional closed subgroups. In this section, we will report on recent progress towards a classification of the irreducible triples of this form. 

We start by recalling earlier work of Ford \cite{Ford1, Ford2} and Ghandour \cite{g_paper} on classical and exceptional groups, respectively. However, our main aim is to discuss the recent results in \cite{BGMT} and \cite{BGT}, where the problem for classical algebraic groups and disconnected maximal subgroups is studied (see \cite{BMT} for further results on non-maximal disconnected subgroups). Since Seitz's main theorem in \cite{Seitz2} plays an important role in the proofs of these results, we also take the opportunity to clarify the status of the main results in light of the new examples discovered in \cite{CT} (see Propositions \ref{p:geomcheck} and \ref{p:nongeomcheck}).

\subsection{Earlier work}

Let $H$ be a positive dimensional disconnected subgroup of $G$ with connected component $H^{\circ}$. Note that in order to determine the irreducible triples $(G,H,V)$, we may as well assume that $HZ(G)/Z(G)$ is disconnected, where $Z(G)$ denotes the center of $G$. These triples were first studied by Ford \cite{Ford1, Ford2} in the 1990s under some additional hypotheses. More precisely, he assumes $G = Cl(W)$ is of classical type and his goal is to determine the irreducible triples $(G,H,V)$ where $H^\circ$ is simple and the restriction $V|_{H^\circ}$ has $p$-restricted composition factors (in \cite{Ford1} he handles the cases where $H^{\circ}$ acts reducibly on $W$, and the irreducible case is treated in \cite{Ford2}). These extra assumptions help to simplify the analysis, but they are somewhat restrictive in terms of possible applications. Nevertheless, under these hypotheses Ford discovered the following interesting family of irreducible triples.

\begin{ex}\label{ex:ford}
(Ford \cite{Ford1}.) Suppose $G=B_n$, $p \ne 2$ and $H=D_n.2$ is embedded in $G$ as the stabilizer of a nonsingular $1$-space. As explained in \cite[Section 3]{Ford1}, there is a family of irreducible triples $(G,H,V)$, where $V = L_G(\l)$ with $\l = \sum_{i}a_i\l_i$ and
$$V|_{H^{\circ}} = V_1 \oplus V_2 = \begin{tikzpicture}[scale=.4, baseline=(current bounding box.base)]
   \tikzstyle{every node}=[font=\small]
   \draw (0,1.4) node[anchor=north] {$a_1$};
   \draw (2,1.4) node[anchor=north] {$a_2$};
   \draw (5.4,1.4) node[anchor=north] {$a_{n-2}$};
   \draw (10,1.2) node[anchor=east] {$a_{n-1}$};
   \draw (10,-1.4) node[anchor=east] {$a_{n-1}+1$};
    \foreach \x in {0,...,1}
    \draw[xshift=\x cm,fill=black] (\x cm,0) circle (.25cm);
    \draw[xshift=2 cm,fill=black] (4,0) circle (.25cm);
    \draw[xshift=6 cm,fill=black] (30: 17 mm) circle (.25cm);
    \draw[xshift=6 cm,fill=black] (-30: 17 mm) circle (.25cm);
    \draw[dotted,thick] (2.55cm,0) -- +(3cm,0);
    \foreach \y in {0.1,...,0.1}
    \draw[xshift=\y cm] (\y cm,0) -- +(1.6cm,0);
    \draw[xshift=6 cm] (30: 2mm) -- (30: 16mm);
    \draw[xshift=6 cm] (-30: 2mm) -- (-30: 16mm);   
    \end{tikzpicture}
    \oplus 
    \begin{tikzpicture}[scale=.4, baseline=(current bounding box.base)]
   \tikzstyle{every node}=[font=\small]
   \draw (0,1.4) node[anchor=north] {$a_1$};
   \draw (2,1.4) node[anchor=north] {$a_2$};
   \draw (5.4,1.4) node[anchor=north] {$a_{n-2}$};
   \draw (10,1.5) node[anchor=east] {$a_{n-1}+1$};
   \draw (10,-1.2) node[anchor=east] {$a_{n-1}$};
    \foreach \x in {0,...,1}
    \draw[xshift=\x cm,fill=black] (\x cm,0) circle (.25cm);
    \draw[xshift=2 cm,fill=black] (4,0) circle (.25cm);
    \draw[xshift=6 cm,fill=black] (30: 17 mm) circle (.25cm);
    \draw[xshift=6 cm,fill=black] (-30: 17 mm) circle (.25cm);
    \draw[dotted,thick] (2.55cm,0) -- +(3cm,0);
    \foreach \y in {0.1,...,0.1}
    \draw[xshift=\y cm] (\y cm,0) -- +(1.6cm,0);
    \draw[xshift=6 cm] (30: 2mm) -- (30: 16mm);
    \draw[xshift=6 cm] (-30: 2mm) -- (-30: 16mm);   
    \end{tikzpicture}
    $$
  subject to the following conditions: 
  \begin{itemize}
  \item[(a)] $a_n=1$;
  \item[(b)] if $a_i,a_j \neq 0$, where $i<j<n$ and $a_k=0$ for all $i<k<j$, then $a_i+a_j \equiv i-j \imod{p}$; 
  \item[(c)] if $i<n$ is maximal such that $a_i \neq 0$ then $2a_i \equiv -2(n-i)-1 \imod{p}$.
  \end{itemize}
It is interesting to note that this family of examples has applications in the representation theory of the symmetric groups, playing a role in the proof of the Mullineux conjecture in \cite{FK}. 
\end{ex}

For exceptional groups, Ghandour \cite{g_paper} has extended Testerman's work in \cite{Test1} to all positive dimensional subgroups. She adopts an inductive approach, using Clifford theory and a combinatorial analysis of weights and their restrictions to a suitably chosen maximal torus of the semisimple group $[H^{\circ},H^{\circ}]$ (it is easy to see that the irreducibility of $V|_{H}$ implies that $H^{\circ}$ is reductive; see \cite[Lemma 2.10]{g_paper}, for example). First she handles the cases where $H$ is a disconnected maximal subgroup of $G$ (this uses the classification of the maximal positive dimensional subgroups of simple algebraic groups of exceptional type, which was completed by Liebeck and Seitz in \cite{LS04}). For a non-maximal subgroup $H$, she studies the embedding 
$$H<M<G$$
of $H$ in a maximal subgroup $M$, noting that $(G,H,V)$ is an irreducible triple only if $V|_{M}$ is irreducible. Now the possibilities for $(G,M,V)$ are known, either by the first part of the argument (if $M$ is disconnected) or by the main theorem of \cite{Test1} (when $M$ is connected). There are very few triples $(G,M,V)$ of this form; in each case, $M$ is a commuting product of small rank classical groups and Ghandour now goes on to identify all irreducible triples 
$(M,H,V)$ to conclude.

\subsection{The main result}

In view of this earlier work, the main outstanding challenge is to extend Ford's results on classical groups in \cite{Ford1, Ford2} by removing the conditions on the structure of $H^\circ$ and the composition factors of $V|_{H^\circ}$. As for exceptional groups, our first goal is to determine the irreducible triples $(G,H,V)$ in the case where $H$ is a disconnected maximal  subgroup (and then the general situation can be studied inductively). As mentioned above, in this article we will focus on the crucial first step in this program.

Before giving a brief overview of the results and methods, it might be helpful to make a couple of basic observations. Let $G = Cl(W)$ be a classical type algebraic group. Firstly, if $(G,H,V)$ is an irreducible triple then $H^{\circ}$ is reductive (as noted above). Secondly, it is easy to determine the relevant triples such that $V|_{H^{\circ}}$ is irreducible from Seitz's main theorem in \cite{Seitz2}. 

\begin{ex}
For instance, consider the case labelled ${\rm II}_{1}$ in \cite[Table 1]{Seitz2}, so $G = C_{10}$, $H^{\circ} = A_5$, $V = L_G(\l_2)$ and $p \ne 2$, with $H^{\circ}$ embedded in $G$ via the $20$-dimensional module $W=\L^3(U)$, the third exterior power of the natural module $U$ for $H^{\circ}$. Since the $KH^{\circ}$-module $W$ is self-dual, it follows that the disconnected group $H = H^{\circ}\la t \ra = H^{\circ}.2$ also acts on $W$, where $t$ is a graph automorphism of $H^{\circ}$. This affords an embedding of $H$ in $G$, which yields an irreducible triple $(G,H,V)$ with $H$ disconnected. 
\end{ex}

As in \cite{Seitz2}, it is not feasible to determine the irreducible triples $(G,H,V)$ with 
$V = W$ or $W^*$. Given these observations, it is natural to work with the following hypothesis. 

\paragraph{Hypothesis \boldmath$(\star)$.} 
\emph{$G = Cl(W)$ is a simply connected simple algebraic group of classical type defined over an algebraically closed field $K$ of characteristic $p \geqs 0$, $H$ is a maximal closed positive dimensional subgroup of $G$ such that $HZ(G)/Z(G)$ is disconnected and $V \ne W,W^*$ is a rational tensor indecomposable $p$-restricted irreducible $KG$-module with highest weight $\l$ such that $V|_{H^\circ}$ is reducible.}

\vs

In view of Theorem \ref{t:ls}, the analysis of the irreducible triples $(G,H,V)$ satisfying the conditions in Hypothesis $(\star)$ naturally falls into two cases; either $H$ is geometric or non-geometric. Recall that $H^{\circ}$ is reductive, fix a maximal torus $T$ of $G$ and suppose $V|_{H}$ is irreducible.

\paragraph{Case 1.} \emph{Geometric subgroups.}

\vs

First we assume $H$ is a geometric subgroup of $G$, so in terms of Theorem \ref{t:ls} we have $H \in \C_i$ for some $i \in \{1,2,3,4,6\}$ and in \cite{BGT} we consider each collection in turn. Typically, the explicit description of the embedding of $H$ in $G$ allows us to compute the restrictions of $T$-weights of $V$ to a suitable maximal torus of the semisimple subgroup $[H^{\circ},H^{\circ}]$. By Clifford theory, the highest weights of the $KH^\circ$-composition factors of $V$ are conjugate under the induced action of $H/H^{\circ}$ and this observation allows us to severely restrict the possibilities for $V$ through a combinatorial analysis of weights and their restrictions (see Section \ref{ss:geom} for more details).

\paragraph{Case 2.} \emph{Non-geometric subgroups.}

\vs

To complete the analysis of maximal subgroups, we turn to the non-geometric almost simple irreducible subgroups comprising the collection $\mathcal{S}$ in Theorem \ref{t:ls}. Here the approach outlined in Case 1 is no longer available since we do not have a concrete description of the embedding of $H$ in $G$, in general. This situation is handled in \cite{BGMT}, where we proceed by adapting Seitz's inductive approach in \cite{Seitz2} for connected subgroups, based on a detailed analysis of parabolic subgroups and their embeddings. We will say more about this in Section \ref{ss:ngeom} below.

\begin{re}\label{r:sdef}
It will be convenient to impose an additional condition on the non-geometric subgroups comprising the collection $\mathcal{S}$ (this corresponds to the condition ${\rm S}4$ on p.3 
of \cite{BGMT}). Namely, if $G = {\rm Sp}(W)$, $p=2$ and $H \in \mathcal{S}$, then we will assume that $H^{\circ}$ does not fix a nondegenerate quadratic form on $W$. In particular, we will view the natural embedding ${\rm GO}(W)<{\rm Sp}(W)$ as a geometric maximal subgroup in the collection $\C_6$. 
\end{re}

We can now present the main result on irreducible triples for classical algebraic groups and maximal positive dimensional disconnected subgroups, which combines the main theorems in \cite{BGMT} and \cite{BGT}. Note that Table $4$ is given in Section \ref{s:tab}.

\begin{thm}\label{t:main_disc}
Let $G$, $H$ and $V = L_G(\l)$ be given as in Hypothesis $(\star)$, and let $\varphi:G \to {\rm SL}(V)$ be the representation afforded by $V$. Then $V|_{H}$ is irreducible if and only if $(\varphi(G),\varphi(H),V)$ is one of the cases recorded in Table $4$.
\end{thm}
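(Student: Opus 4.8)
The plan is to prove Theorem \ref{t:main_disc} by reducing, via Theorem \ref{t:ls}, to the two disjoint cases that organise Hypothesis $(\star)$: the geometric maximal subgroups $H \in \C_i$ ($i \in \{1,2,3,4,6\}$) and the non-geometric subgroups $H \in \mathcal{S}$. Throughout I would fix a maximal torus $T$ of $G$ contained in a Borel subgroup $B=UT$, record that $H^\circ$ is reductive (as noted in the text), and work with a maximal torus $T_{H^\circ}$ of $[H^\circ,H^\circ]$ contained in $T$. The unifying principle is Clifford theory: since $V|_H$ is assumed irreducible and $H^\circ \trianglelefteqslant H$, the module $V|_{H^\circ}$ is completely reducible and its composition factors are permuted transitively by the finite group $H/H^\circ$. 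Hence their highest weights form a single $H/H^\circ$-orbit, all composition factors have equal dimension, and $\dim V$ is divisible by the number of factors. This gives strong numerical and combinatorial constraints that I would exploit in every case, always keeping in mind the standing assumptions $V \ne W, W^*$ and that $V$ is $p$-restricted and tensor indecomposable.

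For \textbf{Case 1} (geometric subgroups), the key advantage is that each $\C_i$ carries an explicit description of the embedding of $H$ in $G$, which lets me compute the restriction of each $T$-weight $\mu \in \L(V)$ to $T_{H^\circ}$ directly. I would treat the collections $\C_1,\ldots,\C_4,\C_6$ in turn. For a typical collection one writes $H^\circ$ as a commuting product of classical factors acting on $W$ in a prescribed geometric way (a subspace stabiliser, an orthogonal or tensor decomposition, or a form-preserving classical subgroup), express $\l|_{T_{H^\circ}}$ and the extremal restricted weights explicitly, and then demand that the restricted weight set splits into equal-sized $H/H^\circ$-conjugate pieces matching a completely reducible decomposition. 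The strategy is to show that this forces $\l$ to have very restricted support on the fundamental dominant weights, typically pinning $V$ down to a symmetric power, an exterior power, or a spin-type module, with congruence conditions on the coefficients modulo $p$ (exactly the shape of the entries of Table $4$). For the subcases in which $V|_{H^\circ}$ is already irreducible one reads the answer off Theorem \ref{t:seitz} together with a self-duality check on the relevant $KH^\circ$-module; the genuinely new content is when $V|_{H^\circ}$ is reducible, as required by Hypothesis $(\star)$.

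For \textbf{Case 2} (non-geometric subgroups $H \in \mathcal{S}$), the explicit-embedding approach is unavailable, so I would instead adapt Seitz's inductive method from \cite{Seitz2} as surveyed above. Given a parabolic $P_H$ of $H$, the Borel--Tits theorem produces a parabolic $P=QL$ of $G$ with $P_H < P$ and $Q_H = R_u(P_H) < Q = R_u(P)$; restricting to the Levi factors, the quotient $V/[V,Q]$ is an irreducible $KL'$-module of highest weight $\l|_{T_{L'}}$, on which $L_H'$ acts, and irreducibility of $V|_H$ passes to this smaller configuration (here I invoke the variant of Smith's theorem and \cite[(2.1)]{Seitz2} as recalled in the text). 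One then argues by induction on $\operatorname{rank}(H)$, with the $A_1$ base case handled by the multiplicity-one property of $KA_1$-modules, at each stage combining the parabolic-level data with the Clifford-theoretic constraint on $H/H^\circ$-orbits of composition-factor highest weights. I would also invoke Remark \ref{r:sdef} to exclude, in the $p=2$ symplectic setting, subgroups $H^\circ$ preserving a quadratic form, since those are treated as $\C_6$-subgroups.

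The main obstacle will be Case 2, and within it the interaction between Seitz's inductive framework and the newly corrected hypotheses. Because the inductive step relies on Theorem \ref{t:seitz} (Seitz's connected classification) to identify the possible connected configurations $(L', H^\circ \cap L', \,\cdot\,)$ at each level, any change to that classification propagates through the induction; the corrected case ${\rm IV}_1'$ and the added cases ${\rm S}_{10}, {\rm S}_{11}$ from \cite{CT} alter the inductive hypothesis and must be shown not to generate spurious or missing disconnected triples. I would therefore verify, case by case at the terminal (small-rank) configurations, that the new connected examples either fail the disconnectedness and reducibility conditions of Hypothesis $(\star)$ or reproduce an entry already present in Table $4$; this is precisely the consistency check recorded in the text as Propositions \ref{p:geomcheck} and \ref{p:nongeomcheck}. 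The remaining difficulty is bookkeeping: ensuring that the up-to-duality convention, the $p=2$ type ambiguity between $B_n$ and $C_n$ resolved by Lemma \ref{l:images}, and the passage between $(G,H,V)$ and $(\varphi(G),\varphi(H),V)$ are applied uniformly, so that the final list exactly matches Table $4$ with no omissions or repetitions.
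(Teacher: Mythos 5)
Your proposal follows essentially the same route as the paper: the split via Theorem \ref{t:ls} into geometric subgroups (handled by explicit weight restriction to a maximal torus of $[H^\circ,H^\circ]$ plus the Clifford-theoretic conjugacy of composition-factor highest weights, as in \cite{BGT}) and non-geometric subgroups (handled by Seitz's inductive parabolic method via the flag $[W,Q_X^i]$ and Smith's theorem, as in \cite{BGMT}), together with the consistency check against the new examples of \cite{CT} recorded in Propositions \ref{p:geomcheck} and \ref{p:nongeomcheck}. The only minor deviations are that the induction in \cite{BGMT} runs on the rank of $G$ rather than of $H$, and that the key reason the corrections to \cite{Seitz2} do not propagate in the non-geometric case is that $H^\circ\in\{A_m,D_m,E_6\}$ has no Levi factor of type $B_\ell$; neither affects the overall correctness of your plan.
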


\begin{re}\label{r:ndisc}
Let us highlight some features of the statement of this theorem.
\begin{itemize}
\item[(a)] First notice that the condition $V \ne W,W^*$ is part of Hypothesis $(\star)$. As before, this is unavoidable if we want to try to determine the cases where the maximal subgroup $H$ is non-geometric (indeed, in this situation, $H^{\circ}$~acts irreducibly on~$W$). However, it is straightforward to read off the disconnected geometric subgroups that act irreducibly on the natural $KG$-module (or its dual); see  \cite[Table 1]{BGT}, for example. 

\item[(b)] In order to be consistent with the statement of Theorem \ref{t:seitz}, we have slightly modified the presentation of the main results in \cite{BGMT, BGT}, in the sense that we give the irreducible triples in the form $(\varphi(G), \varphi(H), V)$. To do this, we use Lemma 
\ref{l:images}. See part (e) below for further comments.

\item[(c)] Almost all of the irreducible triples in Table $4$ arise from geometric maximal subgroups in one of the $\C_i$ collections. In fact, there is only one irreducible triple satisfying the conditions in Hypothesis $(\star)$ with $H \in \mathcal{S}$. This is the case labelled ${\rm N}_{1}$ in Table $4$, where 
\[
\mbox{$G = C_{10}$, $H = A_5.2$ and $\l=\l_3$, with $p \neq 2,3$ and $W|_{H^{\circ}} = L_{H^{\circ}}(\delta_3)$}
\]
with respect to a set of fundamental dominant weights $\{\delta_1, \ldots, \delta_5\}$ for $H^{\circ}$. 

\item[(d)] We use labels ${\rm G}_{i,j}$ to denote the configurations in Table $4$ corresponding to geometric subgroups. Here the first subscript $i$ indicates the specific $\C_i$ collection containing $H$ (see Table \ref{t:subs}). We refer the reader to Tables 3.2, 4.2 and 6.2 in \cite{BGT} for the cases where $H \in \C_1 \cup \C_3 \cup \C_6$, $H \in \C_2$ and $H \in \C_4$, respectively. Note that in the cases labelled ${\rm G}_{4,j}$ with $j \in \{2,3,4,5,7,8\}$ we write $H^{\circ} = A_1^m$, rather than $B_1^m$ or $C_1^m$ as given in \cite{BGT}. 

\item[(e)] In the statement of Theorem \ref{t:main_disc} we include the case $(G,p)=(B_n,2)$, which was excluded in the main theorem of \cite{BGT} on maximal geometric subgroups (note that $G$ acts reducibly on $W$ in this situation, so it does not arise in the study of non-geometric subgroups in \cite{BGMT}). As noted in \cite[Remark 1(b)]{BGT}, the triples that arise in this setting are easily obtained by inspecting the corresponding list of cases in \cite[Table 1]{BGT} for the group $C_n$. 

\vspace{2mm}

In fact, $D_n.2 < C_n$ with $\l = \l_n$ is the only relevant configuration in \cite[Table 1]{BGT}, where $D_n.2$ is the normalizer of the short root subsystem subgroup of type $D_n$. Here the connected component $D_n$ has two composition factors, with highest weights $2\delta_{n-1}$ and $2\delta_{n}$ (see \cite[Lemma 3.2.7]{BGT}). Since the support of $\l$ is on the long simple root, Lemma \ref{l:images} implies that the image of $C_n$ under the representation is of type $B_n$, and the image of the $D_n.2$ subgroup is the normalizer of
the long root subsystem subgroup of type $D_n$ in $B_n$. It is straightforward to see that this subgroup acts irreducibly on the spin module for $B_n$, and the composition factors for $D_n$ are the two spin modules. This configuration is recorded as case ${\rm G}_{6,2}$ in Table $4$.

\item[(f)] In presenting the examples in Table $4$, we adopt the same convention as per Table $3$, in that we only record cases up to duals (see Remark \ref{r:seitz}(c)). As noted in Remark \ref{r:seitz}(d), the situation where $G = D_{2m}$ and the highest weight of $V$ is not fixed by the involutory symmetry $\tau$ of the Dynkin diagram of $G$ requires special attention. In Table $4$, the relevant cases are as follows:
\[
{\rm G}_{1,4},\, {\rm G}_{1,5},\, {\rm G}_{2,4},\, {\rm G}_{2,5},\, {\rm G}_{2,6},\, {\rm G}_{2,7},\, {\rm G}_{3,1},\, {\rm G}_{4,5},\, {\rm G}_{4,6},\, {\rm G}_{4,7}.
\]
By carefully inspecting the appropriate tables in \cite{BGT}, we see that in each of these cases $H$ acts irreducibly on both $L_G(\l)$ and $L_G(\tau(\l))$, with the exception of the configurations 
labelled ${\rm G}_{3,1}$, ${\rm G}_{4,5}$, ${\rm G}_{4,6}$ and ${\rm G}_{4,7}$. In these four cases, the given subgroup $H$ acts irreducibly on exactly one of the 
$KG$-modules $L_G(\lambda)$ and $L_G(\tau(\lambda))$, and the image of $H$ under a graph automorphism of $G$ acts irreducibly on the other module; see \cite{BGT} for the details. As before, we will highlight these special cases by writing $(\dagger)$ in the second column of Table $4$. Note that in the cases where $H$ acts irreducibly on both modules, we just list the module $V = L_G(\l)$ with $\la \l, \a_{2m-1} \ra \ne 0$. Again, this is consistent with  the set-up in Table $3$.

\item[(g)] Finally, let us observe that the final column in Table $4$ gives the number $\kappa \geqs 2$ of $KH^{\circ}$-composition factors of $V|_{H^{\circ}}$. 
\end{itemize}
\end{re}

\subsection{Geometric subgroups}\label{ss:geom}

In this section, we discuss the work of Burness, Ghandour and Testerman in \cite{BGT}, which establishes Theorem \ref{t:main_disc} in the case where $H$ is a positive dimensional geometric subgroup of $G = Cl(W)$ in one of the $\C_i$ collections in Table \ref{t:subs}. 
 
Let $G = Cl(W)$, $H$ and $V = L_G(\l)$ be as in Hypothesis $(\star)$, and let us assume $H$ is a geometric subgroup of $G$. Recall that this means that $H$ is the stabilizer of a natural geometric structure on $W$ (such as a subspace, or a direct sum decomposition of $W$). Typically, this concrete description of $H$ allows us to study the restriction of a $T$-weight for $V$ (where $T$ is a maximal torus of $G$) to a suitably chosen maximal torus of the semisimple group $[H^{\circ},H^{\circ}]$. Moreover,
we may choose an appropriate Borel subgroup of $H$, containing this maximal torus, so that 
$\lambda|_{H^\circ}$ affords the highest weight of an $H^\circ$-composition factor of $V$.
Now if we assume that a certain coefficient in $\l$ is non-zero, then we can often identify a highest weight $\nu \ne \l|_{H^{\circ}}$ of a second $KH^{\circ}$-composition factor of $V$. At this point, we turn to the irreducibility of $V|_{H}$, which implies, via Clifford theory, that $\nu$ is conjugate to $\l|_{H^{\circ}}$ (under the action of the component group $H/H^{\circ}$). If we can find an incompatible highest weight $\nu$ (that is, one which is not conjugate to $\l|_{H^{\circ}}$), then this implies that the specific coefficient in $\l$ we are considering must be zero. In turn, we can then use this information to successively inspect other coefficients of $\l$, with the ultimate aim of identifying the precise $KG$-modules $V$ with $V|_{H}$ irreducible. 

In many ways, this is similar to the approach Ghandour uses in \cite{g_paper} to study the irreducible triples involving disconnected subgroups of exceptional algebraic groups. However, the problem for a classical group $G$ is more complicated because the rank of $G$ can be arbitrarily large, which means that a rather delicate combinatorial analysis of weights and restrictions is required in some cases.

\begin{re}\label{re:ford}
The analysis in \cite{BGT} of the irreducible triples $(G,H,V)$ with $G=B_n$ and $H=D_n.2$, as in Example \ref{ex:ford}, relies on Ford's earlier work in \cite{Ford1}. Indeed, in the proof of \cite[Lemma 3.2.1]{BGT} we reduce to the case where the composition factors of $V|_{H^{\circ}}$ are $p$-restricted, which allows us to apply Ford's result. This is the only part of the proof of the main theorem of \cite{BGT} which is not self-contained.
\end{re}

\begin{ex}\label{ex:c2}
Suppose $G  = {\rm SL}(W) = {\rm SL}_{8}(K) =  \langle U_{\pm \a_{i}} \mid 1 \leqs i \leqs 7 \rangle$, where $\Pi = \{\a_1, \ldots, \a_7\}$ is a set of simple roots for $G$ (with respect to a fixed Borel subgroup $B$ containing a maximal torus $T$ of $G$) and $U_{\a}$ denotes the 
$T$-root subgroup of $G$ corresponding to $\a \in \Sigma(G)$. Let $H$ be the stabilizer of a decomposition $W = W_1 \oplus W_2$, where $\dim W_i = 4$, so in terms of Theorem \ref{t:ls}, $H$ is a maximal geometric subgroup in the collection $\C_2$.  Set $X = [H^{\circ}, H^{\circ}]$ and let $S$ be a maximal torus of $X$. Then $H = H^{\circ}.2$ and without loss of generality, we may assume that 
$$H^{\circ}  = \left\{ \left(\begin{array}{c|c} x & \\ \hline & y \end{array}\right) \, : \, x,y \in {\rm GL}_{4}(K) \right\} \cap G$$
and
$$X = \la U_{\pm\a_1}, U_{\pm\a_2}, U_{\pm\a_3} \ra \times \la U_{\pm\a_5}, U_{\pm\a_6}, U_{\pm\a_7} \ra  \cong {\rm SL}_{4}(K) \times {\rm SL}_{4}(K).$$
We may also assume that $S<T$, so we can study the restriction of $T$-weights to $S$. 
Let $\{\l_1, \ldots, \l_7\}$ be the fundamental dominant weights for $G$ corresponding to the simple roots $\Pi$ above. From the structure of $X$, it follows that the restriction of a $T$-weight $\chi = \sum_{i}b_i\l_i$ to $S$ is given by
\begin{align*}
\chi|_{S}: & \begin{tikzpicture}[scale=.4, baseline=(current bounding box.base)]
   \tikzstyle{every node}=[font=\small]
   \draw (0,1.5) node[anchor=north] {$b_1$};
   \draw (2,1.5) node[anchor=north] {$b_2$};
   \draw (4,1.5) node[anchor=north] {$b_3$};
      \foreach \x in {0,...,2}
    \draw[xshift=\x cm,fill=black] (\x cm,0) circle (.25cm);
    \foreach \y in {0.1,...,1.1}
    \draw[xshift=\y cm] (\y cm,0) -- +(1.6cm,0);
     \end{tikzpicture} \oplus \begin{tikzpicture}[scale=.4, baseline=(current bounding box.base)]
   \tikzstyle{every node}=[font=\small]
   \draw (0,1.5) node[anchor=north] {$b_5$};
   \draw (2,1.5) node[anchor=north] {$b_6$};
   \draw (4,1.5) node[anchor=north] {$b_7$};
      \foreach \x in {0,...,2}
    \draw[xshift=\x cm,fill=black] (\x cm,0) circle (.25cm);
    \foreach \y in {0.1,...,1.1}
    \draw[xshift=\y cm] (\y cm,0) -- +(1.6cm,0);
     \end{tikzpicture}
     \end{align*}
Set $V=L_G(\l)$, where $\l = \sum_{i}a_i\l_i$ is $p$-restricted, and let us assume $V|_{H}$ is irreducible. 

\paragraph{Claim.} $V = W$ or $W^*$ are the only possibilities (that is, $\l = \l_1$ or $\l_7$).

\vs

To see this, we first apply the main theorem of \cite{Seitz2}, which implies that $V|_{H^{\circ}}$ is reducible and thus  $V|_{H^{\circ}} = V_1 \oplus V_2$ by Clifford theory, where $V_1|_{X} = L_X(\l|_{S})$ and 
$V_2|_{X} = L_X(\mu|_{S})$ with
\begin{equation}\label{e:chi}
\begin{split}
\l|_{S}: & \begin{tikzpicture}[scale=.4, baseline=(current bounding box.base)]
   \tikzstyle{every node}=[font=\small]
   \draw (0,1.3) node[anchor=north] {$a_1$};
   \draw (2,1.3) node[anchor=north] {$a_2$};
   \draw (4,1.3) node[anchor=north] {$a_3$};
      \foreach \x in {0,...,2}
    \draw[xshift=\x cm,fill=black] (\x cm,0) circle (.25cm);
    \foreach \y in {0.1,...,1.1}
    \draw[xshift=\y cm] (\y cm,0) -- +(1.6cm,0);
     \end{tikzpicture} \oplus \begin{tikzpicture}[scale=.4, baseline=(current bounding box.base)]
   \tikzstyle{every node}=[font=\small]
   \draw (0,1.3) node[anchor=north] {$a_5$};
   \draw (2,1.3) node[anchor=north] {$a_6$};
   \draw (4,1.3) node[anchor=north] {$a_7$};
      \foreach \x in {0,...,2}
    \draw[xshift=\x cm,fill=black] (\x cm,0) circle (.25cm);
    \foreach \y in {0.1,...,1.1}
    \draw[xshift=\y cm] (\y cm,0) -- +(1.6cm,0);
     \end{tikzpicture} \\
\mu|_{S}: & \begin{tikzpicture}[scale=.4, baseline=(current bounding box.base)]
   \tikzstyle{every node}=[font=\small]
   \draw (0,1.3) node[anchor=north] {$a_5$};
   \draw (2,1.3) node[anchor=north] {$a_6$};
   \draw (4,1.3) node[anchor=north] {$a_7$};
      \foreach \x in {0,...,2}
    \draw[xshift=\x cm,fill=black] (\x cm,0) circle (.25cm);
    \foreach \y in {0.1,...,1.1}
    \draw[xshift=\y cm] (\y cm,0) -- +(1.6cm,0);
     \end{tikzpicture} \oplus \begin{tikzpicture}[scale=.4, baseline=(current bounding box.base)]
   \tikzstyle{every node}=[font=\small]
   \draw (0,1.3) node[anchor=north] {$a_1$};
   \draw (2,1.3) node[anchor=north] {$a_2$};
   \draw (4,1.3) node[anchor=north] {$a_3$};
      \foreach \x in {0,...,2}
    \draw[xshift=\x cm,fill=black] (\x cm,0) circle (.25cm);
    \foreach \y in {0.1,...,1.1}
    \draw[xshift=\y cm] (\y cm,0) -- +(1.6cm,0);
     \end{tikzpicture}
\end{split}
\end{equation}
(recall that $\mu|_{S}$ is conjugate to $\l|_{S}$, under the induced action of the component group $H/H^{\circ}$). In particular, $V|_{H^{\circ}}$ has exactly two composition factors, with highest weights $\l|_{S}$ and $\mu|_{S}$ as above.

First assume $a_4 \ne 0$ and let $s_4$ be the fundamental reflection in the Weyl group $W(G)$ corresponding to $\a_4$. Under the action of $W(G)$ on the set $\L(V)$ of weights of $V$, we have $s_4(\l) = \l - a_4\a_4 \in \L(V)$ and we deduce that 
$$\chi = \l - \a_4  = \l + \l_3-2\l_4+\l_5 \in \L(V)$$ 
(see \cite[1.30]{Test1}). If $v$ is a non-zero vector in the weight space $V_{\chi}$, then one checks that $v$ is a maximal vector for the Borel subgroup $B_{H^{\circ}} = \langle T, U_{\a_i} \mid i \ne 4\rangle$ of $H^{\circ}$ (this follows from \eqref{e:mt}) and thus $\chi$ affords the highest weight of a $KH^{\circ}$-composition factor of $V$. In particular, we must have $\chi|_{S} = \l|_{S}$ or $\mu|_{S}$, but this is not possible since 
\begin{align*}
\chi|_{S}: & \begin{tikzpicture}[scale=.4, baseline=(current bounding box.base)]
   \tikzstyle{every node}=[font=\small]
   \draw (0,1.4) node[anchor=north] {$a_1$};
   \draw (2,1.4) node[anchor=north] {$a_2$};
   \draw (4,1.55) node[anchor=north] {$a_3+1$};
      \foreach \x in {0,...,2}
    \draw[xshift=\x cm,fill=black] (\x cm,0) circle (.25cm);
    \foreach \y in {0.1,...,1.1}
    \draw[xshift=\y cm] (\y cm,0) -- +(1.6cm,0);
     \end{tikzpicture} \oplus \begin{tikzpicture}[scale=.4, baseline=(current bounding box.base)]
   \tikzstyle{every node}=[font=\small]
   \draw (0,1.55) node[anchor=north] {$a_5+1$};
   \draw (2,1.4) node[anchor=north] {$a_6$};
   \draw (4,1.4) node[anchor=north] {$a_7$};
      \foreach \x in {0,...,2}
    \draw[xshift=\x cm,fill=black] (\x cm,0) circle (.25cm);
    \foreach \y in {0.1,...,1.1}
    \draw[xshift=\y cm] (\y cm,0) -- +(1.6cm,0);
     \end{tikzpicture}
     \end{align*}
We conclude that $a_4=0$.

Similarly, if we now assume $a_3 \ne 0$ then $\chi = \l - \a_3-\a_4$ affords the highest weight of a $KH^{\circ}$-composition factor of $V|_{H^{\circ}}$ and once again we reach a contradiction since 
\begin{align*}
\chi|_{S}: & \begin{tikzpicture}[scale=.4, baseline=(current bounding box.base)]
   \tikzstyle{every node}=[font=\small]
   \draw (0,1.4) node[anchor=north] {$a_1$};
   \draw (2,1.55) node[anchor=north] {$a_2+1$};
   \draw (4.5,1.5) node[anchor=north] {$a_3-1$};
      \foreach \x in {0,...,2}
    \draw[xshift=\x cm,fill=black] (\x cm,0) circle (.25cm);
    \foreach \y in {0.1,...,1.1}
    \draw[xshift=\y cm] (\y cm,0) -- +(1.6cm,0);
     \end{tikzpicture} \oplus \begin{tikzpicture}[scale=.4, baseline=(current bounding box.base)]
   \tikzstyle{every node}=[font=\small]
   \draw (0,1.55) node[anchor=north] {$a_5+1$};
   \draw (2,1.4) node[anchor=north] {$a_6$};
   \draw (4,1.4) node[anchor=north] {$a_7$};
      \foreach \x in {0,...,2}
    \draw[xshift=\x cm,fill=black] (\x cm,0) circle (.25cm);
    \foreach \y in {0.1,...,1.1}
    \draw[xshift=\y cm] (\y cm,0) -- +(1.6cm,0);
     \end{tikzpicture}
     \end{align*}
     is not equal to $\l|_{S}$ nor $\mu|_{S}$. Continuing in this way, we reduce to the case $\l = a_1\l_1+a_7\l_7$. 

If $a_1 \ne 0$ then one checks that $\chi = \l - \a_1-\a_2-\a_3-\a_4$ affords the highest weight of a composition factor of $V|_{H^{\circ}}$, so 
\begin{align*}
\chi|_{S}: & \begin{tikzpicture}[scale=.4, baseline=(current bounding box.base)]
   \tikzstyle{every node}=[font=\small]
   \draw (0,1.4) node[anchor=north] {$a_1-1$};
   \draw (2,1.4) node[anchor=north] {0};
   \draw (4,1.4) node[anchor=north] {0};
      \foreach \x in {0,...,2}
    \draw[xshift=\x cm,fill=black] (\x cm,0) circle (.25cm);
    \foreach \y in {0.1,...,1.1}
    \draw[xshift=\y cm] (\y cm,0) -- +(1.6cm,0);
     \end{tikzpicture} \oplus \hspace{-0.7mm} \begin{tikzpicture}[scale=.4, baseline=(current bounding box.base)]
   \tikzstyle{every node}=[font=\small]
   \draw (0,1.4) node[anchor=north] {$1$};
   \draw (2,1.4) node[anchor=north] {0};
   \draw (4,1.4) node[anchor=north] {$a_7$};
      \foreach \x in {0,...,2}
    \draw[xshift=\x cm,fill=black] (\x cm,0) circle (.25cm);
    \foreach \y in {0.1,...,1.1}
    \draw[xshift=\y cm] (\y cm,0) -- +(1.6cm,0);
     \end{tikzpicture}
     \end{align*}
     must be equal to 
\begin{align*}
\mu|_{S}: & \; \; \begin{tikzpicture}[scale=.4, baseline=(current bounding box.base)]
   \tikzstyle{every node}=[font=\small]
   \draw (0,1.4) node[anchor=north] {0};
   \draw (2,1.4) node[anchor=north] {0};
   \draw (4,1.3) node[anchor=north] {$a_7$};
      \foreach \x in {0,...,2}
    \draw[xshift=\x cm,fill=black] (\x cm,0) circle (.25cm);
    \foreach \y in {0.1,...,1.1}
    \draw[xshift=\y cm] (\y cm,0) -- +(1.6cm,0);
     \end{tikzpicture} \hspace{-0.7mm} \oplus \hspace{-0.7mm} \begin{tikzpicture}[scale=.4, baseline=(current bounding box.base)]
   \tikzstyle{every node}=[font=\small]
   \draw (0,1.3) node[anchor=north] {$a_1$};
   \draw (2,1.4) node[anchor=north] {0};
   \draw (4,1.4) node[anchor=north] {0};
      \foreach \x in {0,...,2}
    \draw[xshift=\x cm,fill=black] (\x cm,0) circle (.25cm);
    \foreach \y in {0.1,...,1.1}
    \draw[xshift=\y cm] (\y cm,0) -- +(1.6cm,0);
     \end{tikzpicture}
     \end{align*}
and thus $(a_1,a_7) = (1,0)$ is the only possibility. Therefore, $\l = \l_1$ and $V=W$ is the natural module for $G$. Similarly, if we assume $a_7 \ne 0$ then we deduce that $(a_1,a_7) = (0,1)$ and $V = W^*$ is the dual of the natural module (of course, $H$ does act irreducibly on both $W$ and $W^*$, so the two possibilities we have ended up with are genuine examples).
\end{ex}

To conclude, let us clarify the status of the main theorem of \cite{BGT} in light of the omissions which have recently been identified in Seitz's main theorem in \cite{Seitz2}.

\begin{prop}\label{p:geomcheck}
Let  $G,H,V$ be given as in Hypothesis $(\star)$ and let $\varphi:G \to {\rm SL}(V)$ be the corresponding representation. In addition, let us assume $H$ is a geometric subgroup of $G$. Then $V|_{H}$ is irreducible if and only if $(\varphi(G),\varphi(H),V)$ is one of the cases labelled ${\rm G}_{i,j}$ in Table $4$. 
\end{prop}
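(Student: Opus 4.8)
The plan is to audit the proof of the main theorem of \cite{BGT}, which establishes precisely the cases labelled ${\rm G}_{i,j}$ of Table $4$, and to isolate every point at which Seitz's connected classification \cite{Seitz2} is invoked, either directly or through Ford's theorem \cite{Ford1} (which itself rests on \cite{Seitz2}). The corrections recorded in \cite{CT} amount to exactly three modifications of Seitz's list: the enlargement of the family ${\rm IV}_1'$ for $(G,H)=(D_{n+1},B_n)$ (Table \ref{tab:41}), together with the two isolated cases ${\rm S}_{10}$ for $(C_4,B_3,2)$ and ${\rm S}_{11}$ for $(D_4,A_2)$. Since these corrections can only enlarge Seitz's list, and hence the supply of candidate disconnected triples, the examples recorded in Table $4$ remain genuine exactly as before; the real content of the proposition is therefore the completeness assertion, namely that no geometric disconnected triple outside Table $4$ can arise once the corrected Theorem \ref{t:seitz} replaces the erroneous version.

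First I would recall the strategy of \cite{BGT}: under Hypothesis $(\star)$ one treats the collections $\C_1,\C_2,\C_3,\C_4,\C_6$ in turn, using Clifford theory to fuse the highest weights of the $KH^\circ$-composition factors of $V$ under the action of $H/H^\circ$, and then running the combinatorial weight analysis illustrated in Example \ref{ex:c2} to determine $\l$. As recorded in Remark \ref{re:ford}, the sole part of this argument that is not self-contained is the treatment of $G=B_n$, $H=D_n.2$ in \cite[Lemma 3.2.1]{BGT}, where one reduces to $p$-restricted composition factors and appeals to Ford \cite{Ford1}; the remaining uses of \cite{Seitz2} serve only to identify composition factors and to exclude the possibility that $V|_{H^\circ}$ is irreducible. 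I would therefore list these invocations explicitly and test each of them against the three corrections.

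Next I would dispatch the two isolated cases. Both ${\rm S}_{10}$ and ${\rm S}_{11}$ concern a \emph{simple} group ($B_3$, respectively $A_2$) embedded \emph{non-geometrically} in a classical group of small, fixed rank; consequently, under Hypothesis $(\star)$ with $H$ geometric, $H^\circ$ never realises either embedding, and neither case can itself arise as a member of a $\C_i$ collection. Because both have bounded rank, I would verify by direct inspection, using L\"ubeck's dimension data \cite{Lubeck} together with the weight-restriction method, that neither feeds indirectly into the geometric analysis, so that no new ${\rm G}_{i,j}$ triple is produced.

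The main obstacle is the enlarged family ${\rm IV}_1'$, since this governs exactly the $B/D$-type configurations entering the $\C_1$ analysis for $G=B_n$ through Ford's family (Example \ref{ex:ford}), and more generally wherever a $B_n$-composition factor inside a $D_{n+1}$-module must be identified. Here the key step is to re-run the $(B_n,D_n.2)$ reduction of \cite[Lemma 3.2.1]{BGT} with the corrected form of ${\rm IV}_1'$ in place and to confirm that it reproduces exactly the congruence conditions (a)--(c) of Example \ref{ex:ford}. This is made plausible by the observation that the internal congruence $a_i+a_j+(j-i)\equiv 0\imod{p}$ defining the corrected family in Table \ref{tab:41}(b) coincides with Ford's condition (b), reflecting a common $B/D$ restriction mechanism, the remaining endpoint conditions (a) and (c) differing only because the two embeddings $B_n<D_{n+1}$ and $D_n<B_n$ attach at opposite ends of the Dynkin diagram. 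Since \cite{CT} has already confirmed that the corrected ${\rm IV}_1'$ leaves the statement of \cite{Seitz2} intact, and Ford's argument rests on \cite{Seitz2}, verifying this compatibility shows that the enlargement introduces no previously overlooked configuration. Combining this with the dispatch of the two isolated cases yields that the geometric disconnected irreducible triples are exactly the ${\rm G}_{i,j}$ cases of Table $4$, as claimed.
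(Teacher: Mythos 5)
Your overall strategy --- locate every appeal to \cite{Seitz2} in the proof of the main theorem of \cite{BGT} and test it against the corrections from \cite{CT} --- is the right one, and your observation that the corrections only \emph{enlarge} Seitz's list (so that the examples in Table $4$ survive and only completeness is at stake) is correct. The execution of the key step is where the argument breaks down. The paper's proof rests on a structural fact that you never establish: the analysis in \cite{BGT} of each geometric collection works with an \emph{explicit} description of the embedding $H<G$ (a maximal torus of $H$ inside one of $G$, and in some cases the root groups themselves), so the weight-restriction arguments are self-contained and never quote Seitz's classification as an exclusion device; the single external input is Ford's treatment of $D_n.2<B_n$ in case ${\rm G}_{1,1}$, and Ford's own argument is a direct computation with universal enveloping algebras that does not pass through \cite[(8.7)]{Seitz2}. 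You instead assert that Ford's theorem ``itself rests on \cite{Seitz2}'' --- which, taken at face value, would put conditions (a)--(c) of Example \ref{ex:ford} in doubt --- and you propose to resolve the matter by checking that the corrected case ${\rm IV}_1'$ ``reproduces'' Ford's congruences. That is a category error: ${\rm IV}_1'$ concerns $B_n<D_{n+1}$ while Ford's family concerns $D_n.2<B_n$, and the formal coincidence of the condition $a_i+a_j+j-i\equiv 0 \imod{p}$ between the two tables proves nothing about whether the \emph{proof} of one depends on the \emph{statement} of the other. What has to be shown is that Ford's derivation of (a)--(c), and the reduction in \cite[Lemma 3.2.1]{BGT} to $p$-restricted composition factors, nowhere use the erroneous exclusion in \cite[(8.7)]{Seitz2}; your ``compatibility check'' does not address this.

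The same issue recurs with ${\rm S}_{10}$ and ${\rm S}_{11}$: noting that these are non-geometric embeddings rules them out as candidates for $H$ itself, but the danger is that some step of the geometric analysis quotes Seitz's table to assert that a subgroup of type $B_3<C_4$ or $A_2<D_4$ arising \emph{internally} (inside a Levi or tensor factor, say) acts reducibly on some module. You defer this to ``direct inspection'' without carrying it out, whereas the paper disposes of it at a stroke by the same structural observation --- the geometric analysis never argues by quoting Seitz's list at all. So the proposal poses the right question but leaves its decisive part unproved, and the one concrete mechanism offered in its place does not do the required work.
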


\begin{proof} 
This follows directly from the proof of the main theorem in \cite{BGT}. As indicated in the discussion preceding Example \ref{ex:c2}, we work directly with an explicit description of each geometric maximal subgroup $H$, which allows us to study the embedding of a maximal torus of $H$ in a maximal torus of $G$. In addition, in some cases we even have a concrete description of the root groups of $H$ in terms of the root groups of $G$. As noted in Remark \ref{re:ford}, we  do use Ford's work in \cite{Ford1} to handle the case labelled ${\rm G}_{1,1}$ in Table $4$. Here $G = B_n$, $H = D_n.2$ and Ford works with a precise description of the embedding, through a series of calculations with the corresponding universal enveloping algebras. Therefore, in all cases, the existence of the new examples discussed in Section \ref{ss:new} (see the cases labelled ${\rm IV}_1'$ and ${\rm S}_{10}$ in Table $3$) has no impact on the proof of the main theorem in \cite{BGT}.
\end{proof}

\subsection{Non-geometric subgroups}\label{ss:ngeom}

Here we briefly discuss the proof of Theorem \ref{t:main_disc} in the remaining case where $H$ is a disconnected non-geometric almost simple subgroup of $G = Cl(W)$ in the collection $\mathcal{S}$. The proof is due to Burness, Ghandour, Marion and Testerman (see \cite{BGMT}).

Set $X = H^{\circ}$. Since $H$ is disconnected, it follows that $X$ is of type $A_m$ (with $m \geqs 2$), $D_m$ (with $m \geqs 4$) or $E_6$. In addition, $W$ is an irreducible and tensor indecomposable $KX$-module, so we may write $W|_{X}=L_X(\delta)$. Also recall from Theorem \ref{t:ls} that if $G = {\rm SL}(W)$ then $X$ does not fix a nondegenerate form on $W$. In addition, following \cite{BGMT}, we may assume that $X$ satisfies the conditions
\begin{itemize}
\item[(I)] If $(G,p) = (C_n,2)$, then $X$ does not fix a nondegenerate quadratic form on $W$ (see Remark \ref{r:sdef}), and 
\item[(II)] $W=L_X(\delta)$ is a $p$-restricted $KX$-module. 
\end{itemize}
Note that if $(G,p)=(B_n,2)$ then $G$ acts reducibly on $W$, so this case does not arise in the study of irreducible triples with $H \in \mathcal{S}$. 

The next result is the main theorem of \cite{BGMT} (in the statement, we write $\{\delta_1, \ldots, \delta_m\}$ for a set of fundamental dominant weights for $H^{\circ}$, labelled in the usual way). The example appearing in the statement is recorded as case ${\rm N}_{1}$ in Table $4$.

\begin{thm}\label{t:ngeom}
Let $G$, $H$ and $V = L_G(\l)$ be given as in Hypothesis $(\star)$ and assume $H \in \mathcal{S}$ satisfies conditions {\rm (I)} and {\rm (II)} above. Then $V|_{H}$ is irreducible if and only if 
$G = C_{10}$, $H = A_5.2$ and $\l=\l_3$, with $p \neq 2,3$ and $W|_{H^{\circ}} = L_{H^{\circ}}(\delta_3)$.
\end{thm}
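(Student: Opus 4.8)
The plan is to adapt Seitz's inductive parabolic method, as carried out in \cite{BGMT}, the essential new ingredient being a Clifford-theoretic orbit condition on the highest weights of the $KH^\circ$-composition factors of $V$. First I would record the structural constraints. Since $H$ is disconnected and lies in $\mathcal{S}$, the component $X = H^\circ$ is simple of a type admitting a nontrivial graph automorphism, namely $A_m$ ($m \geqs 2$), $D_m$ ($m \geqs 4$) or $E_6$; fix $\sigma \in H$ whose image in $H/H^\circ$ induces a nontrivial graph automorphism $\tau$ of $X$. By conditions (I) and (II) we may take $W|_{X} = L_X(\delta)$ to be $p$-restricted, irreducible and tensor indecomposable; this datum $\delta$ determines the embedding $T_X < T$ of maximal tori and hence the restriction map $X(T) \to X(T_X)$. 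Fixing a maximal vector $v^+ \in V$ of weight $\l$ and setting $\mu = \l|_{T_X}$, the weight $\mu$ is the highest weight of the $KX$-composition factor generated by $v^+$. Since $V|_{H^\circ}$ is reducible while $V|_{H}$ is irreducible, Clifford theory forces the highest weights of the $KX$-composition factors of $V$ to form a single $\langle \tau \rangle$-orbit; in particular $\tau(\mu) \neq \mu$, and $\tau(\mu)$ must itself arise as one of these highest weights.

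Because $H \in \mathcal{S}$ comes with no explicit matrix description of its embedding (in contrast to the geometric case of Example \ref{ex:c2}), the principal tool has to be the inductive parabolic descent. Given a $\tau$-stable parabolic subgroup $P_X$ of $X$, the Borel--Tits theorem (\cite[Corollary 3.9]{BT}) supplies a parabolic $P = QL$ of $G$ with $P_X < P$ and $R_u(P_X) < Q$; then $V/[V,Q]$ is an irreducible $KL'$-module whose highest weight is the restriction of $\l$, and on which the disconnected Levi $L_X \langle \sigma \rangle$ acts. This reduces the problem to one of strictly smaller rank, the connected reductions being governed by Seitz's theorem (Theorem \ref{t:seitz}) and the base cases being the minimal-rank groups $A_2$, $D_4$ and $E_6$ that carry a graph automorphism. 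Running this induction confines the possible configurations $(X, \delta, \l)$ to a short, explicit list.

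For each surviving candidate I would then apply the weight analysis in the spirit of Example \ref{ex:c2}: locating suitable subdominant weights of $V$ whose vectors are maximal for a Borel subgroup of $X$ (via \eqref{e:mt}), one produces the highest weight $\nu$ of a further $KX$-composition factor, restricts it to $T_X$, and tests it against the orbit $\{\mu, \tau(\mu)\}$. Each time such a $\nu$ is \emph{incompatible}, that is, not $\tau$-conjugate to $\mu$, a coefficient of $\l$ is forced to vanish; iterating across the support of $\l$ and exploiting the rigidity of the graph automorphism eliminates every case except $G = C_{10}$, $X = A_5$, $W|_{X} = L_X(\delta_3) = \L^3(U)$, $\l = \l_3$ with $p \neq 2,3$. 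To close, I would verify directly that in this case $V|_{X} = V_1 \oplus V_2$ with $\tau$ interchanging the two factors, so that $H = A_5.2$ does act irreducibly and a genuine triple results.

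The hard part will be the weight-restriction combinatorics underlying the last two paragraphs. Since $\dim W = \dim L_X(\delta)$, and hence $\operatorname{rank} G$, is unbounded, and since no dimension formula or composition-factor multiplicity formula is available in characteristic $p$, one cannot simply enumerate the composition factors of $V|_X$ and read off their highest weights. The resolution is precisely Seitz's inductive descent, which quarantines the delicate analysis to low rank, combined with the severe rigidity of the $\tau$-orbit condition, which leaves only the single $C_{10}$ example standing.
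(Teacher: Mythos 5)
Your proposal follows essentially the same route as the proof in \cite{BGMT} that this survey sketches in Section \ref{ss:ngeom}: the Clifford-theoretic orbit condition on the highest weights of the $KH^{\circ}$-composition factors, the embedding of suitable parabolic subgroups of $X=H^{\circ}$ into parabolic subgroups $P=QL$ of $G$ with an inclusion of unipotent radicals, Smith's theorem applied to $V/[V,Q]$, induction on the rank of $G$, and a closing weight-restriction analysis that eliminates all but the $C_{10}$ configuration. The only presentational difference is that \cite{BGMT} realises $P$ explicitly as the stabilizer of the flag $W>[W,Q_X]>[W,Q_X^2]>\cdots$, which is what makes the structure of $L'$ computable from the $T_X$-weights of $W$ (e.g.\ the $A_1$ factor arising from a $\tau$-stable Borel subgroup), rather than merely invoking Borel--Tits for its existence.
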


\begin{re}
Let $(G,H,V)$ be the irreducible triple arising in the statement of Theorem \ref{t:ngeom}. Here 
$$V|_{H^{\circ}} = V_1 \oplus V_2 =  \begin{tikzpicture}[scale=.4, baseline=-0.8mm]
   \draw (0,1.4) node[anchor=north] {$1$};
   \draw (6,1.4) node[anchor=north] {$2$};
      \tikzstyle{every node}=[font=\small]
    \foreach \x in {0,...,4}
    \draw[xshift=\x cm,fill=black] (\x cm,0) circle (.25cm);
    \foreach \y in {0.1,...,3.1}
    \draw[xshift=\y cm] (\y cm,0) -- +(1.6 cm,0);
  \end{tikzpicture} \hspace{1mm} \oplus \hspace{1mm} \begin{tikzpicture}[scale=.4, baseline=-0.8mm]
   \draw (2,1.4) node[anchor=north] {$2$};
   \draw (8,1.4) node[anchor=north] {$1$};
   \tikzstyle{every node}=[font=\small]
    \foreach \x in {0,...,4}
    \draw[xshift=\x cm,fill=black] (\x cm,0) circle (.25cm);
    \foreach \y in {0.1,...,3.1}
    \draw[xshift=\y cm] (\y cm,0) -- +(1.6 cm,0);
  \end{tikzpicture}$$
and thus the two $KH^{\circ}$-composition factors of $V$ are $p$-restricted. In particular, $(G,H,V)$ satisfies the conditions on the irreducible triples studied by Ford in \cite{Ford1,Ford2}. However, in the statement of Ford's main theorem, this configuration has been omitted in error. We refer the reader to \cite[Remark 3.6.18]{BGMT} for further details.
\end{re}

As described in Section \ref{ss:geom}, we can study the irreducible triples of the form $(G,H,V)$ for a geometric subgroup $H$ by considering weights and their restrictions to an appropriate maximal torus of $[H^{\circ}, H^{\circ}]$, which relies on the fact that we have a concrete description of the embedding of $H$ in $G$ in terms of root subgroups. However, a non-geometric subgroup $H \in \mathcal{S}$ is embedded via an arbitrary $p$-restricted, tensor indecomposable representation of $X=H^{\circ}$, which renders the previous approach infeasible. Therefore, in order to handle this situation we turn to the approach used by Seitz in \cite{Seitz2} (and also Ford \cite{Ford1,Ford2}), which relies on studying the action of certain parabolic subgroups of $G$ on $V$, which are constructed in a natural way from parabolic subgroups of $X$. 

For example, suppose $H = X\langle t \rangle$, where $t$ is an involutory graph automorphism of $X$. Let us assume $V|_{H}$ is irreducible, with $V|_X = V_1 \oplus V_2$, and let $P_X=Q_XL_X$ be a parabolic subgroup of $X$, where $Q_X$ is the unipotent radical and $L_X$ is a Levi factor. Following Seitz \cite{Seitz2} and Ford \cite{Ford1,Ford2}, we consider the flag
\begin{equation}\label{e:flag}
W > [W,Q_X^1] > [W,Q_X^2] > \cdots > 0,
\end{equation}
where $[W,Q_X^0] = W$ and 
$$[W,Q_X^i]=\la qw-w \,:\, w \in [W,Q_X^{i-1}], \, q \in Q_X\ra$$
for $i \geqs 1$. Set $W_i = [W,Q_{X}^{i}] / [W,Q_{X}^{i+1}]$ for each $i \geqs 0$ and let $P=QL$ be the stabilizer in $G$ of this flag, which is a parabolic subgroup of $G$ with unipotent radical $Q$ and Levi factor $L$, with several desirable properties. In particular, $Q_X<Q$ (see \cite[Lemma 2.7.1]{BGMT}). 

We can study the weights occurring in $[W,Q_X^i]$ in order to obtain a lower bound on $\dim W_i$, which then leads to structural information on $L'$.  Now a theorem of Smith \cite{Smith} implies that $L'$ acts irreducibly on the quotient $V/[V,Q]$, and similarly $L_X'$ on each $V_i/[V_i,Q_X]$. By combining this observation with our lower bounds on the dimensions of the quotients in \eqref{e:flag}, we can obtain restrictions on the highest weight $\l$ of $V$. 
For instance, if we consider the case where $P_X$ is a $t$-stable Borel subgroup of $X$, then \cite[Lemma 5.1]{Ford1} implies that $L'$ has  an $A_1$ factor and this severely restricts the coefficients of $\l$ corresponding to the roots in $L'$ (this is illustrated in Example \ref{ex:ng} below). 

Beyond the choice of a $t$-stable Borel subgroup, we often study the embedding of other
parabolic subgroups $P_X$ of $X$ in parabolic subgroups of $G$, again with an inclusion of unipotent radicals as above. In the general setting, both $L'$ and $L_X'$ act on the space $V/[V,Q]$, and as a module for $L_X'$ we have
\[
V/[V,Q] = V_1/[V_1,Q_X]\oplus V_2/[V_2,Q_X] \mbox{ or } V_i/[V_i,Q_X]
\]
for $i=1,2$. If we write $L'=L_1\cdots L_k$, a commuting product of simple algebraic groups,  then 
\[
V/[V,Q]=M_1\otimes\cdots\otimes  M_k,
\] 
where each $M_i$ is an irreducible $KL_i$-module. In particular, if $M_i$ is nontrivial then we  may consider the triple $(L_i,\pi_i(L_X'),M_i)$,
where $\pi_i:L_X'\to L_i$ is the natural projection map induced by the isomorphisms and inclusions 
\[
L_X\cong P_X/Q_X\cong P_XQ/Q< P/Q\cong L.
\] 
It follows that $(L_i,\pi_i(L_X'),M_i)$ is either an irreducible triple, or $L_X'$ has precisely two composition factors on $M_i$. This allows us to apply induction on the rank of $G$, using the previously considered smaller rank cases to obtain information on the highest weights of the $M_i$, and ultimately to determine the highest weight $\lambda$ of $V$.

\begin{ex}\label{ex:ng}
Suppose $H = A_2\langle t \rangle = A_2.2$ and $p \ne 2,5$, so $X=A_2$ and $t$ is an involutory graph automorphism of $X$. Let $\{\delta_1, \delta_2\}$ be the fundamental dominant weights for $X$ corresponding to the simple roots $\{\b_1,\b_2\}$. Set $W=L_X(\delta)$, where $\delta = 2\delta_1+2\delta_2$, so $\dim W = 27$ and the action of $X$ on $W$ embeds $X$ in $G = {\rm SO}(W)$. Moreover, the symmetry of $\delta$ implies that $W$ is self-dual, so $t$ also acts on $W$ and thus $H<G$. Set $V=L_G(\l)$, where $\l = \sum_{i}a_i\l_i$ is $p$-restricted and $\l \ne \l_1$.

\paragraph{Claim.} $V|_{H}$ is reducible.

\vs

Seeking a contradiction, let us assume $V|_{H}$ is irreducible. By the main theorem of \cite{Seitz2}, $V|_{X}$ is reducible and so Clifford theory implies that $V|_{X} = V_1 \oplus V_2$, where $V_i = L_X(\mu_i)$. Without loss of generality, we may assume that $\mu_1 =\l|_{X} = c_1\delta_1+c_2\delta_2$, in which case $\mu_2$ is the image of $\mu_1$ under the action of $t$, so $\mu_2 = c_2\delta_1+c_1\delta_2$. Moreover, the irreducibility of $V|_{H}$ implies that $V_1$ and $V_2$ are non-isomorphic (see \cite[Proposition 2.6.2]{BGMT}), whence $c_1 \ne c_2$.

Let $P_X=Q_XL_X$ be a $t$-stable Borel subgroup of $X$ (so $L_X=T_X$ is a maximal torus of $X$) and consider the corresponding flag of $W$ given in \eqref{e:flag}. Define the quotients $W_i$ as above. Using \cite[(2.3)]{Seitz2}, one checks that
$$\dim W_0 = 1, \; \dim W_1 = 2, \; \dim W_2 = 4, \; \dim W_3 = 4, \; \dim W_4 = 5,$$
which implies that $L'=A_1A_3A_3B_2$. By the aforementioned theorem of Smith \cite{Smith}, 
$$V/[V,Q] = V_1/[V_1,Q_X] \oplus V_2/[V_2,Q_X] = M_1 \otimes M_2 \otimes M_3 \otimes M_4$$
is an irreducible $KL'$-module, with highest weight $\l|_{L'}$ (here the notation indicates that $M_i$ is an irreducible module for the $i$-th factor of $L'$). But $V/[V,Q]$ is $2$-dimensional, so $M_1$ must be the natural module for the $A_1$ factor of $L'$, and $M_2$, $M_3$ and $M_4$ are trivial (see \cite[Lemma 5.2]{Ford1}). This observation yields useful information on the coefficients in $\l$; indeed, we obtain the following partially labelled diagram:
 $$
 \begin{tikzpicture}[scale=.4, baseline=(current bounding box.base)]
   \tikzstyle{every node}=[font=\small]
    \foreach \x in {0,...,12}
    \draw[xshift=\x cm,fill=black] (\x cm,0) circle (.25cm);
    \draw[xshift=12 cm,fill=black] (12 cm, 0) circle (.25cm);
    \foreach \y in {0.1,...,10.1}
    \draw[xshift=\y cm] (\y cm,0) -- +(1.6cm,0);
    \draw (22.2 cm, .1 cm) -- +(1.6 cm,0);
    \draw (22.2 cm, -.1 cm) -- +(1.6 cm,0);
    \draw (23.3,0) --+ (-0.7,0.4);
    \draw (23.3,0) --+ (-0.7,-0.4);
    \draw (2 cm, 0) circle (.75cm);
    \draw (0cm,1.4) node[anchor=north] {$a_1$};
    \draw (4cm,1.4) node[anchor=north] {$a_3$};
    \draw (12cm,1.4) node[anchor=north] {$a_7$};
    \draw (20cm,1.4) node[anchor=north] {$a_{11}$};
    \draw (2cm,2) node[anchor=north] {$1$};
     \draw (6cm,2) node[anchor=north] {$0$};
      \draw (8cm,2) node[anchor=north] {$0$};
       \draw (10cm,2) node[anchor=north] {$0$};
       \draw (14cm,2) node[anchor=north] {$0$};
      \draw (16cm,2) node[anchor=north] {$0$};
       \draw (18cm,2) node[anchor=north] {$0$};
       \draw (22cm,2) node[anchor=north] {$0$};
       \draw (24cm,2) node[anchor=north] {$0$};
    \draw (5cm,-0.75cm) -- (5cm,0.75cm);
    \draw (5cm,0.75cm) -- (11cm,0.75cm);
     \draw (11cm,0.75cm) -- (11cm,-0.75cm);
      \draw (11cm,-0.75cm) -- (5cm,-0.75cm);
      \draw (13cm,-0.75cm) -- (13cm,0.75cm);
    \draw (13cm,0.75cm) -- (19cm,0.75cm);
     \draw (19cm,0.75cm) -- (19cm,-0.75cm);
      \draw (19cm,-0.75cm) -- (13cm,-0.75cm);
      \draw (21cm,-0.75cm) -- (21cm,0.75cm);
    \draw (21cm,0.75cm) -- (25cm,0.75cm);
     \draw (25cm,0.75cm) -- (25cm,-0.75cm);
      \draw (25cm,-0.75cm) -- (21cm,-0.75cm);
           \end{tikzpicture}$$
           In other words, we have reduced to the case where 
           \begin{equation}\label{e:lam}
           \l = a_1\l_1+\l_2 + a_3\l_3+a_7\l_7 +a_{11}\l_{11}.
           \end{equation}

By choosing an appropriate ordering on the $T_X$-weights in $W$, we can determine the restrictions of the simple roots $\a_i$ to the subtorus $T_X$ (see \cite[Table 3.2]{BGMT}). For example, we can fix an ordering so that $\a_2|_{X} = \b_2-\b_1$. Now, since the coefficient of $\l_2$ in \eqref{e:lam} is positive, it follows that  $\l - \a_2$ is a $T$-weight of $V$ and thus $(\l-\a_2)|_{X} = \mu_1 - \b_2+\b_1$ is a $T_X$-weight of $V|_{X}$. Moreover, by considering root restrictions we can show that this is the highest weight of a $KX$-composition factor of $V$, so we must have $\mu_2 = \mu_1 - \b_2+\b_1$. 

By \cite[(8.6)]{Seitz2}, $\delta-\b_1-\b_2$ has multiplicity $2$ as a $T_X$-weight of $W$ (here we are using the fact that $p \ne 5$), which means that we may assume $(\l - \sum_{i=1}^{6}\a_i)|_{X} =  \delta-\b_1-\b_2$ and thus $\a_6|_{X}=0$. It follows that there are at least three $T$-weights of $V$ which give $\nu = \l|_{X}-\b_2$ on restriction to $T_X$, so the multiplicity of $\nu$ is at least $3$. However, one checks that $\nu$ has multiplicity $1$ in both $V_1$ and $V_2$, so this is incompatible with the decomposition $V|_{X} = V_1 \oplus V_2$. This contradiction completes the proof of the claim.
\end{ex}

Finally, let us address the veracity of the main theorem of \cite{BGMT}, in view of the corrections to Seitz's main theorem discussed in Section \ref{ss:new}. 

\begin{prop}\label{p:nongeomcheck} 
Let  $G,H,V$ be given as in Hypothesis $(\star)$ and let us assume $H$ is non-geometric.
Then $V|_{H}$ is irreducible if and only if $(G,H,V)$ is as given in Theorem $\ref{t:ngeom}$.
\end{prop}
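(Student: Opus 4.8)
The plan is to follow the strategy used for Proposition \ref{p:geomcheck}: trace through the inductive argument of \cite{BGMT} that establishes Theorem \ref{t:ngeom}, isolate the precise points at which the classification of connected irreducible triples in \cite{Seitz2} is invoked, and check that none of these appeals is disturbed by the examples added in \cite{CT}. Recall from Section \ref{ss:ngeom} that the proof proceeds by induction on the rank of $G$. With $X = H^\circ$ and $V|_X = V_1 \oplus \cdots \oplus V_\kappa$ ($\kappa \geqs 2$) by Clifford theory, one embeds a parabolic subgroup $P_X = Q_XL_X$ of $X$ into a parabolic subgroup $P = QL$ of $G$ and studies the resulting sub-triples $(L_i, \pi_i(L_X'), M_i)$, where $L' = L_1 \cdots L_k$ and $\pi_i : L_X' \to L_i$ is the natural projection. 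Each sub-triple is either a connected irreducible triple, to which \cite{Seitz2} or the inductive hypothesis applies, or else $\pi_i(L_X')$ has exactly two composition factors on $M_i$. Together with an initial appeal to \cite{Seitz2} to rule out the possibility that $V|_X$ is irreducible for $V \ne W, W^*$, these are the only points at which Seitz's theorem enters the proof.

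I would then recall from Section \ref{ss:new} exactly which triples were added to Seitz's theorem in \cite{CT}: the corrected family ${\rm IV}_1'$, with $(G,H) = (D_{n+1}, B_n)$, and the isolated case ${\rm S}_{10}$, with $(G,H,p) = (C_4, B_3, 2)$. The feature common to both, and the crux of the argument, is that in each of these configurations the connected subgroup $H$ is of type $B_n$.

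The key observation is that no connected subgroup of type $B$ is ever consulted in the proof of Theorem \ref{t:ngeom}. Indeed, since $H$ is disconnected, $X = H^\circ$ is of type $A_m$ $(m \geqs 2)$, $D_m$ $(m \geqs 4)$ or $E_6$, and so every simple factor of a Levi subgroup $L_X'$ of $X$ is of type $A$ or $D$. Hence the connected subgroup appearing in each appeal to \cite{Seitz2} -- namely $X$ itself in the initial step, and each projection $\pi_i(L_X')$ in the inductive step -- is of type $A$, $D$ or $E_6$, and in particular is never of type $B_n$ with $n \geqs 2$. It follows that neither ${\rm IV}_1'$ nor ${\rm S}_{10}$ can arise among the connected triples used in the proof, so the argument of \cite{BGMT} goes through verbatim and yields exactly the single triple recorded in Theorem \ref{t:ngeom}.

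The main obstacle I anticipate is verifying, at each appeal to \cite{Seitz2}, that the relevant subgroup really is of type $A$, $D$ or $E_6$ as an abstract group, the one delicate point occurring when $p = 2$. Here groups of type $B_n$ and $C_n$ are interchanged by a special isogeny, and the type of the image of a representation depends on the highest weight via Lemma \ref{l:images}; one must be sure this cannot conjure a type-$B$ subgroup out of a type-$A$ or type-$D$ one. However, Lemma \ref{l:images} concerns only the image of a group already of type $B_n$ or $C_n$, whereas a simple group of type $A$, or of type $D_m$ with $m \geqs 4$ (which admits no exceptional isogeny), can never have image of type $B_n$ for $n \geqs 2$. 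Thus there are no additional configurations to examine, and the examples of \cite{CT} have no impact on Theorem \ref{t:ngeom}.
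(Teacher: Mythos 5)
Your proposal is correct and follows essentially the same route as the paper: both arguments reduce to the observation that the new triples from \cite{CT} all involve a connected subgroup of type $B_n$, whereas $H^{\circ}$ is of type $A_m$, $D_m$ or $E_6$ and hence has no parabolic subgroup with a Levi factor of type $B_{\ell}$, so no appeal to Seitz's theorem in the inductive argument of \cite{BGMT} is affected. Your additional check that the $p=2$ isogeny issue in Lemma \ref{l:images} cannot produce a type-$B$ image from a type-$A$ or type-$D$ group is a sensible extra precaution but does not change the substance of the argument.
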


\begin{proof} 
As discussed above, in the proof of the main result of \cite{BGMT} we proceed by induction on the rank of $G$, studying the embedding of various parabolic subgroups of $H$ in parabolic subgroups of $G$. We then rely upon an inductive list of examples, for disconnected as well as connected subgroups. In view of the recently discovered configurations described in Section \ref{ss:new}, we must go back through our proof, paying particular attention to all cases where the Levi factor of the chosen parabolic of $H^{\circ}$ has type $B_{\ell}$. But for the groups $H$ satisfying the hypotheses of the proposition, we have $H^{\circ} = A_m$ (with $m\geqs 2$), $D_m$ (with $m\geqs 4$) or $E_6$. In particular, $H^{\circ}$ does not have a parabolic subgroup whose Levi factor is of type $B_{\ell}$, so the arguments in \cite{BGMT} are not affected by the new examples.
\end{proof}

\section{Tables}\label{s:tab}

In this final section we present Tables $3$ and $4$, which describe the irreducible triples arising in the statements of Theorems \ref{t:seitz} and \ref{t:main_disc}. Note that Table $3$ can be viewed as a corrected version of \cite[Table 1]{Seitz2}. In the following two remarks, we explain the set-up and notation in each table.

\begin{re}\label{r:tab5}
Let us describe the notation in Table $3$. 
\begin{itemize}
\item[(a)] In the first column we give the label for the particular case in that row of the table; our choice of labels is consistent with \cite[Table 1]{Seitz2}. As explained in Remark \ref{r:seitz}, 
we have included the following additional cases in Table $3$:
\[
{\rm S}_{10}, \; {\rm S}_{11},\; {\rm MR}_{1}', \; {\rm MR}_{2}'
\]
and we have corrected the conditions in the case labelled ${\rm IV}_{1}'$.

\item[(b)] The second column gives the pair $(G,H)$, plus any additional information on ${\rm char}(K)=p$ and the rank of $G$, as needed. If no restrictions on $p$ are presented, then it is to be assumed that the only conditions on $p$ are to ensure that the highest weight of $V = L_G(\l)$ (as given in the final column of the table) is $p$-restricted. For example, in case ${\rm IV}_{1}$ we have $\l = k\l_{n}$, so this means that $p=0$ or $p \geqs k+1$. 

\item[(c)] In terms of the second column, the case labelled ${\rm IV}_{2}'$ requires special attention. Here $H$ is a subgroup of a central product $B_kB_{n-k}$, so there are natural projection maps 
$\pi_1:H\to B_k$ and $\pi_2:H \to B_{n-k}$. Seitz's notation  
\[
H \to' B_kB_{n-k}<D_{n+1}
\] 
indicates that either $H$ projects onto both simple factors, or some factor is of type $B_2$ and the projection of $H$ to this factor is a group of type $A_1$ acting irreducibly on the spin module for the $B_2$ (in the latter situation, note that we also need the condition $p \geqs 5$, as indicated in the table). In addition, as Seitz notes on \cite[p.9]{Seitz2}, it may be necessary for the projections to involve distinct field twists to ensure the irreducibility of $V|_{H}$. For instance, if $A_1 \to' B_2B_2<D_5$ then we need different field twists on the embedding of $A_1$ in each $B_2$ factor. This notation is also used in cases ${\rm S}_{6}$ and ${\rm MR}_{5}$, with the same meaning. 

\item[(d)] In case ${\rm IV}_{9}$ we write $(\dagger)$ in the second column to indicate that the given subgroup $H$ acts irreducibly on exactly one of the two spin modules for $G$ (see Remark \ref{r:seitz}(d)).

\item[(e)] In the case labelled ${\rm MR}_{1}$, we use the notation $\widetilde{A}_2$ to denote a subsystem subgroup of type $A_2$ corresponding to short roots in $G_2$ (and similarly $\widetilde{D}_{4}<F_4$ in case ${\rm MR}_{2}$). See Remark \ref{r:seitz}(g) for further details.

\item[(f)] The embedding of $H$ in $G$ is described in the third column of the table, in terms of the action of $H$ on the $KG$-module $W$ associated with $G$. In particular, if $G$ is of classical type then $W$ is just the natural module. Similarly, if $G$ has type $G_2$, $F_4$, $E_6$ or $E_7$ then $W$ is the minimal module for $G$ of dimension $7$ ($6$ if $p=2$), $26$ ($25$ if $p=3$), $27$ or $56$, respectively (note that the case $G = E_8$ does not arise in Table $3$). 

\vspace{1.5mm}

Typically, $H$ is simple and $W|_{H}$ is irreducible, in which case we give the highest weight of $W|_{H}$ in terms of a set of fundamental dominant weights $\{\delta_1, \ldots, \delta_m\}$ for $H$. This notation is suitably modified in cases ${\rm IV}_{3}$ and ${\rm IV}_{5}$, where $H = H_1H_2$ is a central product of two simple groups. Note that in cases ${\rm T}_1$ and 
${\rm T}_2$, we write $0$ for the highest weight of the trivial module. Following \cite[Table 1]{Seitz2}, in some cases we write ``usual" in the third column to indicate that the embedding of $H$ in $G$ is via the usual action of $H$ on the natural $KG$-module. For example, in the case labelled ${\rm IV}_{1}$, $H = B_n$ is embedded in $G = D_{n+1}$ as the stabilizer of a nonsingular $1$-space.

\item[(g)] In the fourth and fifth columns, we give the highest weights of $V|_{H}$ and $V$, respectively, in terms of labelled Dynkin diagrams, together with any additional conditions on the relevant parameters. Once again, this is consistent with \cite[Table 1]{Seitz2}, with the exception that the conditions in case ${\rm IV}_{1}'$ have been corrected in view of \cite{CT} (see Section \ref{ss:new}). 
\end{itemize}
\end{re}

\vspace{35pt}

\begin{center}
Table 3. The irreducible triples with $H$ connected
\end{center}

{\tiny
  \setlength{\tabcolsep}{4.96pt}
}

\begin{re}\label{r:tab6}
Let us make some comments on the notation appearing in Table $4$.
\begin{itemize}
\item[(a)] The first column gives the label for each case $(G,H,V)$ in the table. The configurations ${\rm G}_{1,1}$-${\rm G}_{6,2}$ are cases where $H$ is a geometric maximal subgroup of $G$, whereas ${\rm N}_{1}$ records the only case arising in Theorem \ref{t:main_disc} with $H$ non-geometric. As explained in Remark \ref{r:ndisc}, a label ${\rm G}_{i,j}$ indicates that $H$ is a geometric subgroup in the collection $\C_i$ (see Table \ref{t:subs}). 

\item[(b)] In the second column we give the pair $(G,H)$, together with any additional conditions on $G$ and $H$, as well as the characteristic $p$. For each geometric configuration ${\rm G}_{i,j}$, the embedding of $H$ in $G$ can be read off from the subscript $i$, which gives the specific $\C_i$ family containing $H$. In the one non-geometric case, the subgroup $A_5.2$ is embedded in $C_{10}$ via the $20$-dimensional $KH^{\circ}$-module $\L^3(U)$, where $U$ is the natural module for $H^{\circ}$ (see Theorem \ref{t:ngeom}). In addition, in the second column we write $T_i$ to denote an $i$-dimensional torus, and the notation $(\dagger)$ has the same meaning as in Table $3$ (see Remark \ref{r:tab5}(d)).

\item[(c)] Set $J = [H^\circ,H^\circ]$, so either $J$ is semisimple, or $H^\circ$ is a maximal torus of $G$ and $J=1$ (the latter occurs in the cases ${\rm G}_{2,1}$ and ${\rm G}_{2,5}$). In the cases where $J$ is semisimple, the third column gives the highest weight of a $KJ$-composition factor $V_1$ of $V$ (in the two exceptional cases, we just write ``$-$"). Similarly, in the fourth column we give the highest weight of $V$. Note that the case appearing in the first row (labelled ${\rm G}_{1,1}$) is Ford's interesting example from \cite{Ford1}, and so it is convenient to refer the reader to Example \ref{ex:ford} for the various conditions on the highest weight which are needed for the irreducibility of $V|_{H}$.

\item[(d)] In the final column, $\kappa$ is the number of $KH^\circ$-composition factors of $V|_{H^\circ}$.
\end{itemize}
\end{re}

\vspace{45pt}

\begin{center}
Table 4. The irreducible triples with $G$ classical, $H$ disconnected \\
and maximal, and $V|_{H^\circ}$ reducible
\end{center}

\vspace{-2mm}

{\tiny
  \setlength{\tabcolsep}{4.95pt}
}

\vspace{30pt}

\end{document}